\theoremstyle{plain}
\newtheorem{theorem}{Theorem}[section]
\newtheorem{proposition}[theorem]{Proposition}
\newtheorem{lemma}[theorem]{Lemma}
\newtheorem{definition}[theorem]{Definition}
\theoremstyle{definition}
\newtheorem{remark}[theorem]{Remark}
\theoremstyle{plain}
\numberwithin{equation}{section}
\newcommand \loc 	{\text{loc}} 
\newcommand \be    	{\begin{equation}}
\newcommand \ee     {\end{equation}} 
\newcommand \RR      {\mathbb{R}} 
\newcommand \del     \partial 
\DeclareMathOperator  \sgn {sgn} 
\DeclareMathOperator   \TV  {TV} 
\newcommand \nn     {\mathbf n} 
\newcommand \TT     {\mathcal{T}}
\newcommand \Tcal   {\TT}
\newcommand \dK		{\partial K}
\newcommand \nek		{\nn_{e,K}} 
\newcommand \fek		{f_{e,K}}
\newcommand \feke	{f_{e,K_e}}
\newcommand \Fek		{F_{e,K}} 
\newcommand \Lip		{\mathop{\mathrm{Lip}}}
\newcommand \unk		{u^n_K}
\newcommand \unke	{u^n_{K_e}} 
\newcommand \unnkee	{u^{n+1}_{K,e}} 
\newcommand \tunnkee		{\tilde u^{n+1}_{K,e}} 
\newcommand \dnnkee	{D^{n+1}_{K,e}}
\newcommand \Hcal	{\mathcal{H}}
\def\Xint#1{\mathchoice {\XXint\displaystyle\textstyle{#1}}%
{\XXint\textstyle\scriptstyle{#1}}%
{\XXint\scriptstyle\scriptscriptstyle{#1}}%
{\XXint\scriptscriptstyle\scriptscriptstyle{#1}}%
\!\int}
\def\XXint#1#2#3{{\setbox0=\hbox{$#1{#2#3}{\int}$}
\vcenter{\hbox{$#2#3$}}\kern-.5\wd0}}
\def\dashint{\Xint\diagup}
\DeclareMathOperator\divex {div}
\newcommand{\dive}{\divex_{g}}
\DeclareMathOperator\tr {tr} \DeclareMathOperator \ct {C}
\DeclareMathOperator\e {e}
\newcommand \Euhu	{E_{\delta, \epsilon}^h(u^h,u)}
\newcommand \Euuh	{E_{\delta, \epsilon}^h(u,u^h)}
\newcommand \Thuhu	{\Theta_{ \delta,\epsilon}^h & ( u^h,u(t,x);t,x)}
\newcommand \Ruuh	{R_{\delta,\epsilon}^h(u,u^h)}
\newcommand \Suuh 	{S_{\delta,\epsilon}^h(u,u^h)}
\begin{document}
\title{Hyperbolic conservation laws on manifolds.
\\
Error estimate for finite volume schemes}

\author{
Philippe G. LeFloch$^1$, Wladimir Neves$^2$, 
\\
and Baver
Okutmustur$^1$}

\date{}

\maketitle

\footnotetext[1]{ Laboratoire Jacques-Louis Lions, Centre
National de la Recherche Scientifique, Universit\'e de Paris 6, 4
Place Jussieu,  75252 Paris, France. E-mail: {\sl
LeFloch@ann.jussieu.fr, Okutmustur@ann.jussieu.fr.} }

\footnotetext[2]{ Instituto de Matem\'atica, Universidade Federal
do Rio de Janeiro, C.P. 68530, Cidade Universit\'aria 21945-970,
Rio de Janeiro, Brazil. E-mail: {\sl Wladimir@im.ufrj.br.}
\newline
\textit{To appear in:} Acta Mathematica Sinica.   
\textit{AMS Subject Classification.} {Primary: 35L65.
Secondary: 76L05, 76N.}
\textit{Key words and phrases.} Hyperbolic conservation law, entropy solution,
finite volume scheme, error estimate, discrete entropy inequality, convergence rate.}

\begin{abstract}
Following Ben-Artzi and LeFloch, we consider nonlinear hyperbolic conservation laws 
posed on a Riemannian manifold, and we establish an $L^1$-error estimate for a class of finite
volume schemes allowing for the approximation of entropy solutions to the initial value problem. 
The error in the $L^1$ norm is of order $h^{1/4}$ at most, where $h$ represents the maximal diameter 
of elements in the family of geodesic triangulations. The proof relies on a suitable generalization 
of Cockburn, Coquel, and LeFloch's theory which was originally developed in the Euclidian setting. 
We extent the arguments to curved manifolds, by taking into account the effects to the geometry 
and overcoming several new technical difficulties.
\end{abstract}

\maketitle
 
\section{Introduction and background} \label{IN}
\subsection{Purpose of this paper} 

The mathematical theory of hyperbolic
conservation laws posed on curved manifolds $M$ was initiated by Ben-Artzi
and LeFloch \cite{BL}, and developed together with collaborators \cite{ABL,ALO,BFL,LeFloch,LO,LO2}. For these equations, a suitable generalization of
Kruzkov's theory has now been 
established and provides the existence and
uniqueness of an entropy solution to the initial and boundary
value problem for a large class of hyperbolic conservation laws and manifolds. The convergence of the finite volume schemes with
monotone flux was also established for conservation posed on manifolds.

The purpose of the present paper is to show that the error estimate for finite volume methods, due
to Cockburn, Coquel, and LeFloch \cite{CCL2} in the Euclidian
setting carries over to curved manifolds. To this end, we will need to revisit
Kuznetzov's approximation theory \cite{KNN1,KNN2} and adapt
the technique developed in \cite{CCL2}. One technical difficulty
addressed here is the adaption of the standard ``doubling of variables'' technique to curved manifolds.
We recover that the rate of error in the $L^1$ norm is of order $h^{1/4}$, where $h$ is 
the maximal diameter of an element of the triangulation of the manifold,
as first discovered in \cite{CCL2}.   

Recall that the well-posedness theory for hyperbolic conservation laws posed on a compact manifold was established 
in \cite{BL}, while the convergence of monotone finite volume schemes was proved in \cite{ABL}. In both papers,
DiPerna's measure-valued solutions \cite{DP} were used and can be
viewed as a generalization of Kruzkov's theory \cite{K}. In
contrast, in the present paper we rely on Kuznetsov's theory,
which allows us to bypass DiPerna's notion of measure-valued
solutions. Indeed, our main result in this paper provides both an
error estimate in the $L^1$ norm and, as a corollary, the actual convergence of the scheme to the
entropy solution; this result can be used to establish the existence of this entropy solution.

For another approach to conservation laws on manifolds we refer to Panov \cite{Panov} 
and for high-order numerical methods to Rossmanith, Bale, and LeVeque \cite{RBL} and the references therein. 
Concerning the Euclidian case $M=\RR^n$ we want to mention that the work by Cockburn, Coquel, and LeFloch \cite{CCL1,CCL2} 
(submitted and distributed in 1990 and 1991, respectively)  
was followed by important developments and applications by Kr\"oner \cite{Kroener} and Eymard, Gallouet, and Herbin \cite{EGH}
to various hyperbolic problems including also elliptic equations.  
In \cite{CCL1}, the technique of convergence using measure-valued solutions 
goes back to pioneering works by Szepessy \cite{Szepessy,Szepessy2} 
and Coquel and LeFloch \cite{CL1,CL2,CL3}. 
Concerning the error estimates we also refer to Lucier \cite{Lucier,Lucier2}, as well
as to Bouchut and Perthame \cite{BP} 
where the Kuznetsov theory is revisited.

An outline of this paper follows. In the rest of the present section we present some background on conservation 
laws on manifolds and briefly 
recall the corresponding well-posedness theory. Then in Section~2 we present the class of schemes under consideration 
together with the error estimate. Sections~3 and 4 contain estimates for various terms arising in the decomposition 
of the $L^1$ distance between the exact and the approximate solutions. 
The proof of the main theorem is given at the beginning of Section~4.

\subsection{Conservation laws on a manifold} \label{CL}

Let $(M,g)$ be a connected, compact, $n$-dimensional, smooth manifold endowed with a smooth metric $g$, 
that is, a smooth and non-degenerate $2$-covariant tensor field: 
for each $x \in M$, $g_x$ is a scalar product on the tangent space $T_xM$ at $x$.    
For any tangent vectors $X,Y \in T_xM$, we use the notation $g_x(X,Y) = \langle X,Y \rangle_g$ and 
$|X|_g := \langle X,X \rangle_g^{1/2}$.
We denote by $d_g$ the associated distance function 
and by $dv_g=dv_M$ the volume measure determined by the metric. 
Moreover, we denote by $\nabla_g$ the Levi-Civita connection associated with $g$.  
The divergence operator $\dive$ of a vector field is defined intrinsically as the trace of its covariant derivative.
It follows from the Gauss-Green formula that 
for every smooth vector field and any smooth open subset $S \subset M$
$$
\int_S \dive f\; dv_M = \int_{\del S} \langle f,n \rangle_g \, dv_{\del S},
$$
where $\del S$ is the boundary of $S$, $n$ is the outward unit normal along 
$\del S$, and  $dv_{\del S}$ is the induced measure on $\del S$.

Consider local coordinates $(x^i)$ together with the associated basis of tangent vectors
$\{\e_i\}= \{\del_i \}$ and covectors $\{\e^i\}$. The
differential of a function $u:M \to \RR$ is the differential form
$du= (du)_i  \e^i = {\del u \over \del x^i} e^i$, 
where the summation convention over repeated indices is used. The vector field $\nabla_g u$
associated with $du$ is given by
$\nabla_g u= (\nabla_g u)^i \; \e_i = g^{ij} \, (du)_j e_i$, 
where $(g^{ij})$ is the inverse of the matrix  $(g_{ij}) = \big( \langle \e_i,\e_j\rangle_g\big)$. 
The covariant derivative of a vector field $X$ is a $(1,1)$-tensor field whose coordinates are denoted by 
$(\nabla_g X)^j_k$.  
The following formula for the divergence of a smooth vector field will be useful: 
$$
\begin{aligned}
  \dive\big(f(u,x)\big)&= du(\partial_u f(u,x)) + \big(\dive f\big)(u,x)
  \\
  &= \partial_u f^i \: {\del u \over \del x^i} +
  \frac{1}{\sqrt{|g|}} \partial_i (\sqrt{|g|} \; f^i).
\end{aligned}
$$

We will use the following standard notation for function spaces defined on $M$. 
For $p \in [1,\infty]$ the usual norm of a function $h$ in the Lebesgue space $L^p(M;g)$ is denoted 
by $\| h \|_{L^p(M;g)}$
and, when $p=\infty$, we also write $\| h \|_\infty$. For any $f\in L_\loc^1(M; g)$ and any open subset $N \subset M$ we use the notation
$$ 
\dashint_N f(y)\; dv_g (y) := |N|_g^{-1}\int_N f(y) \; dv_g(y),
\qquad 
|N|_g := \int_N dv_g. 
$$

\subsection{Well-posedness theory}

We are interested in the following initial-value problem posed on the manifold $(M,g)$
\begin{eqnarray}
\label{CON1}     
u_t + \dive\big(f(u,\cdot)\big) &=& 0 \, \, \, \qquad \quad \text{ on } \RR_+ \times M, 
\\
\label{CON2}    
u(0,\cdot) & =&  u_0 \qquad \quad \text{ on } M,
\end{eqnarray}
where $u: \RR_+ \times M \to \RR$ is the unknown and  
the flux $f= f_x(u) = f(\overline u,x)$ is a smooth vector field which is 
defined for all $x \in M$ and also depends smoothly upon the 
real parameter $\overline u$.  
The initial data in \eqref{CON2} is assumed to be measurable and bounded, i.e. $u_0 \in L^\infty(M)$.
Moreover, $f$ satisfies the following growth condition 
\begin{eqnarray} 
\label{KC3} 
\max_{x \in M} |\big(\dive f\big)(u,x)| \leq C + C' \, |u|, \qquad u \in \RR 
\end{eqnarray}
for some constants $C, C'>0$.

\begin{definition}  A pair $(U,F$ is called an entropy pair if $U:\RR \to \RR$ is a Lipschitz continuous
function and $F=F(u,x)$ is a vector
field such that, for almost all $\bar{u} \in \RR$ and all $x \in M$, 
$$
    \del_u F(\bar{u},x) = \del_u U(\bar{u})
    \del_u f(\bar{u},x).
$$
If $U$ is also convex, then $(U,F)$ is called a convex entropy
pair.
\end{definition}

The most
important example of convex entropy pairs is the family of Kruzkov's
entropies, defined for $u, c \in \RR$ by 
\be \label{KRUZ}
    \begin{aligned}
     \big( U(u,c),F_x(u,c)\big) &:= \Big( |u-c|, \:
     \sgn(u-c) (f_x(u)- f_x(c)) \Big)\\
     &\;= \Big( (u \vee c - u \wedge c), \:
     f_x(u\vee c)- f_x(u\wedge c)\Big),
    \end{aligned}
\ee 
where $u\vee c = \max \{u,c\}$,  $u \wedge c = \min \{u,c\}$.

\begin{definition}
\label{ENT.SOL}
A function $u \in L^{\infty}(\RR_+ \times M)$ is called an entropy solution
to the initial value problem \eqref{CON1}-\eqref{CON2}, if for
every entropy pair $(U,F)$ and all smooth functions $\phi =
\phi(t,x) \geq 0$ compactly supported in $[0,\infty) \times M$,
\be 
\begin{aligned}
      & \iint_{\RR_+ \times M} \Big( U(u) \, \phi_t +
      \langle F_x (u), \nabla_g \phi \rangle_g \Big) \; dv_gdt\\
      & + \iint_{\RR_+ \times M} G_x(u)\phi(t,x) \, dv_g dt + \int_{M} U(u_0(x)) \phi(0) \; dv_g \geq 0,
\end{aligned}
\ee
where $G_x(u):= (\dive F_x)(u) - \del_u U(u) (\dive f_x)(u)$.
\end{definition}

For instance, with Kruzkov's entropies the above definition becomes (for all $c \in \RR$)
$$
 \begin{aligned}
 &\iint_{\RR_+ \times M} \Big(U (u,c) \, \phi_t + \langle F_x(u,c),\nabla_g \phi \rangle_g \Big)\, dv_g dt
\\
   & -\iint _{\RR_+ \times M} \sgn(u-c) \, (\dive f)(c,x)\; \phi \, dv_g dt
   +  \int_M |u_0 - c| \, \phi(0) \, dv_g \geq 0.
 \end{aligned}
$$
The well-posed theory for the initial value problem \eqref{CON1}-\eqref{CON2} 
was established in Ben-Artzi and LeFloch \cite{BL}. 

In the present paper, we are interested in the discretization of the problem \eqref{CON1}-\eqref{CON2}
in the case that the initial data is bounded and has finite total variation
\be 
\label{IDR}
u_0 \in L^\infty(M) \cap BV(M; g).
\ee

In particular, it is established in \cite{BL} that 
in the case of bounded initial data, the following variant of the maximum principle is established: 
$$ 
   \|u(t)\|_{L^\infty(M)} \leq C_0(T,g) + C_0'(T,g) \:
   \|u(s)\|_{L^\infty(M)}, \quad \quad 0\leq s \leq t \leq T,
$$
where the constants $C_0,C_0'>0$ depend on $T$ and the metric $g$.

Recall the definition of the total variation of a function $w: M \to \RR$  
$$
  \TV_g(w):= \sup_{\|\phi\|_\infty \leq 1} \int_M w \: \dive\!\phi \: dv_g,
$$
where $\phi$ describes all $C^1$ vector fields with compact support. We denote by
$$
  BV(M; g) = \{u \in L^1(M; g) \, / \, \TV_g(u) < \infty \},
$$
the space of all functions with finite total variation on $M$. It is well-known that 
(provided $g$ is sufficiently smooth) the 
imbedding $BV(M; g) \subset L^1(M; g)$ is compact.

In fact, an important property of entropy solutions to \eqref{CON1}-\eqref{CON2} is the following one: 
$u$ has finite total variation for all times $t \geq 0$ if \eqref{IDR} holds 
and, moreover, 
$$
\label{TVE}
  \TV_g(u(t)) \leq C_1(T,g) + C_1'(T,g) \: \TV_g(u(s)), \quad \quad 0 \leq s \leq t \leq T,
$$
where the constants $C_1, C_1' >0$ depend on $T$ and $g$; see \cite{BL} for details.
Of course, this implies a control of the flux
of the equation
$$
\sup_{t \geq 0} \int_M  \Big|\dive \big(f(u(t,\cdot),\cdot)\big)\Big| dv_g
 \leq C \, \TV_g(u_0). 
$$   
However, as noted in \cite{ABL}, this inequality can be derived more directly from the conservation 
laws and one checks that the constant $C$ is independent of both $T$ and $g$ and only depend on the largest 
wave speed arising in the problem.


\section{ Statement of the main result}
\subsection{Family of geodesic triangulations}

For $\tau>0$, we consider the uniform mesh $t_n:= n \, \tau$ ($n=0,1,2, \ldots$) on the half-line $\RR_+$.
For $h>0$ we denote by $\Tcal^h$ a triangulation of the given manifold $M$ 
which is made of 
non-overlapping and non-empty curved polyhedra $K \subset M$,
whose vertices in $\del K$ are joined by geodesic faces. We assume that, if
two distinct elements $K_1, K_2 \in \Tcal^h$ have a non-empty
intersection, say $I$, then either $I$ is a geodesic face of both
$K_1$, $K_2$ or $\Hcal^{n-1}(I) = 0$, where $\Hcal^{n-1}$ denotes the $(n-1)$-dimensional Hausdorff measure. 

The boundary $\dK$ of $K$ consists of the set of all faces $e$ of $K$. We denote by $K_e$ the unique element distinct
from $K$ sharing the face $e$ with $K$. The outward unit normal to
an element $K$ at some point $x \in e$ is denoted by $\nek(x) \in T_xM$. Finally, $|K|$ and $|e|$
represent the $n$- and $(n-1)$-dimensional Hausdorff measures of $K$ and $e$, respectively. 
We set
$$
  p_K := \sum_{e\in\dK}|e|
$$
and for each $K\in \TT^h$ the diameter $h_K$ of $K$ is
$$ 
h_K:= \sup_{x,y\in K} d_g(x,y).
$$

We set 
$$
h := \, \sup \{ h_K: K\in \TT^h\},
$$
 which is assumed to tend to zero along a sequence of geodesic triangulations.
We also assume that there exist constants $\gamma_1, \gamma_2 > 0$ such that
\be 
\label{IN1}
   \gamma_1^{-1}h \leq \tau \leq \gamma_1 h
\ee and \be\label{IN2}
    \gamma_2^{-1} |K| \leq h_K \; p_K   \leq \gamma_2 |K|
\ee
for all $K\in \TT^h$. This condition implies that (as $h \to 0$) 
$$
 \tau \to 0, \qquad \ h^2\tau^{-1} \to 0.
$$
Finally, we set $T = \tau \; n_T$ for every integer $n_T$.


\subsection{Numerical flux-functions}

As in the Euclidean case, the finite volume method can be introduced by formally 
averaging the conservation law \eqref{CON1} over an element $K\in \TT^h$, applying the Gauss-Green formula, 
and finally discretizing the time derivative with a two-point scheme. First,
we define a right-continuous, piecewise
constant function: for $n=0,1,\ldots$, 
\be 
\label{PCF}
   u^h(t,x)= u_K^n \qquad (t,x) \in [ t_n, t_{n+1}) \times M,
\ee
where
$$ 
u_K^n := \dashint_{K} u(t_n,x) \, dv_g (x),
$$ and
\be  \label{PCF2}
 u_K^0 := \dashint_{K}
u_0(x)dv_g(x).
\ee
Then, in view of \eqref{CON1} we write 
$$
\begin{aligned}
    0 &= \frac {d}{dt} \dashint_{K} u(t,x) \; dv_g(x) + \dashint_{K}
    \dive f(u(t,x),x) \; dv_g(x)
\\[8pt]
& \approx \frac {u_K^{n+1}-u_K^n}{\tau} +  \frac {1}{|K|}
\sum_{e\in \dK} \int_e \langle f(u(t,y)),n_{e,K}(y)\rangle_g \;
d\Gamma_g(y).
\end{aligned}
$$
We introduce flux-functions $f_{e,K} :\RR \times \RR \to \RR$ 
and write 
$$
\dashint_{e} \langle f(w(u_K^n;u^n_{K_e}),y),n_{e,K}(y)\rangle_g \; d\Gamma_g(y)
 \approx  f_{e,K}(u_K^n,u_{K_e}^n).  
$$

The discrete flux are assumed to satisfy the following properties:
\begin{itemize}
\item{\emph{Consistency property :} for $u\in \RR,$
\be
    \label{FVM.1}
    \fek(u,u) = \dashint_{e} \langle f(u,y), \nek(y)\rangle_g\,
    d\Gamma_g(y).
    \ee
    }
\item{\emph{Conservation property :} for $u,v \in \RR,$
        \be
    \label{FVM.2}
    \fek(u,v)+ \feke(v,u)=0.
\ee }

\item{\emph{Monotonicity property:}
 \be
    \label{FVM.3}
    \frac{\del}{\del u} \fek \ge 0, \qquad \frac{\del}{\del v} \fek \le
    0.\\[5pt]
    \ee
}
\end{itemize}

Then, we formulate the finite volume approximation as follows: 
\be 
\label{FVM} 
u_K^{n+1}:= u_K^n - \frac{\tau}{|K|} \sum _{e\in
\dK} |e| \; f_{e,K}(u_K^n, u_{K_e}^n) \quad \qquad (n=0,1,\ldots).
\ee
For the sake of stability of the numerical method, we impose a CFL stability condition:  
\be 
\label{CFL}
\tau \,  \sup_{K\in \TT^h} \frac{p_K}{|K|} \, \Lip(f) \leq 1,
\ee
where $\Lip(f)$ is the Lipschitz constant of $f$.

\subsection{Main theorem}

The main result of the present paper is as follows. 

\begin{theorem}[Error estimate for the finite volume scheme on manifolds]
\label{MT} 
Let $u: \RR_+\times M \to \RR$ be the
entropy solution associated with the initial value problem \eqref{CON1}-\eqref{CON2}
for an initial data $u_0 \in L^\infty(M) \cap BV(M; g)$. 
Let $u^h$ be the approximate solution defined by \eqref{PCF} and
\eqref{FVM}. Then, for each $T>0$ there exist constants 
$$
\aligned
  &\ct_0 = \ct_0(T,g,\|u_0\|_{L^\infty}), \quad
  \ct_1= \ct_1(T,g,\TV_g(u_0)), 
  \\
  & 
  \ct_2= \ct_2(T,g,\|u_0\|_{L^2(M;g)})
\endaligned
$$
such that for all $t \in [0,T]$  
$$
\aligned
& \|u^h(t) -u(t)\|_{L^1(M; g)} 
\\
& \leq  \Big( \ct_0 \, |M|_g + \ct_1 \Big) \: h
   + \Big(\ct_0 \, |M|_g^{1/2} + \big(\ct_0 \, \ct_1\big)^{1/2}
  \Big) \: |M|_g^{1/2} \; h^{1/2}
  \\
  & \quad + \Big( \big(\ct_0 \, \ct_2 \big)^{1/2}  |M|_g^{1/2} + \big(\ct_1 \, \ct_2\big)^{1/2}
  \Big) \: |M|_g^{1/4} \; h^{1/4}.
  \endaligned
$$
\end{theorem}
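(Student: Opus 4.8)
The plan is to adapt the Kuznetsov–Cockburn–Coquel–LeFloch approximation framework to the Riemannian setting. I would first establish a \emph{continuous-dependence} or \emph{approximate entropy} estimate, which quantifies how far a generic function $v$ fails to satisfy the entropy inequalities for the exact solution $u$. Concretely, I would introduce the Kuznetsov-type functional $\Lambda_{\delta,\epsilon}(u,v)$ built from Kruzkov's entropy pairs \eqref{KRUZ} and a symmetric doubling kernel in both the time and the manifold variables, regularized at scales $\epsilon$ (space) and $\delta$ (time). The key identity I expect is an inequality of the form
$$
\|v(t)-u(t)\|_{L^1(M;g)} \leq \|v(0)-u(0)\|_{L^1(M;g)} + C(\epsilon+\delta) \TV_g(u_0) + \Lambda_{\delta,\epsilon}(u,v),
$$
valid for any $v$ of bounded variation, where $\Lambda_{\delta,\epsilon}$ collects the entropy-production defect of $v$. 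The first main obstacle is the \emph{doubling of variables on a manifold}: unlike in $\RR^n$, there is no global difference $x-y$, so the test function must be built from the geodesic distance $d_g(x,y)$ (or an intrinsic mollifier along the diagonal), and the Gauss--Green formula together with the divergence formula stated in the excerpt must be applied carefully so that the curvature-dependent commutator terms between $\nabla_g$ acting in $x$ and in $y$ are controlled by $C\,\epsilon$ times geometric quantities depending only on $g$.

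Next I would apply this abstract inequality with $v=u^h$, the piecewise-constant scheme solution defined by \eqref{PCF}--\eqref{FVM}. The heart of the argument is then to bound the entropy defect $\Lambda_{\delta,\epsilon}(u,u^h)$ using the \emph{discrete entropy inequalities} that the monotone scheme satisfies by virtue of the consistency \eqref{FVM.1}, conservation \eqref{FVM.2}, and monotonicity \eqref{FVM.3} properties. Here I expect to decompose the defect into several residual terms — a time-discretization residual, a space/flux-discretization residual, and a geometric residual coming from the fact that the numerical normals $\nek$ and face measures $|e|$ only approximate the smooth geometry of the geodesic triangulation. Each residual should be estimated in terms of the mesh parameters $h,\tau$, the total variation of the data via the transported bound $\TV_g(u(t)) \le C_1 + C_1' \TV_g(u_0)$, and the $L^\infty$/$L^2$ bounds following from the maximum principle recalled in the excerpt. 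The mesh conditions \eqref{IN1}--\eqref{IN2} and the CFL condition \eqref{CFL} are used repeatedly to convert $\tau$ into $h$ and to control $p_K/|K|$ so that summation over faces and time steps produces the correct powers of $h$.

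The final and most delicate quantitative step is the \emph{optimization over the regularization parameters} $\epsilon$ and $\delta$. After collecting all residuals, the bound on $\|u^h(t)-u(t)\|_{L^1(M;g)}$ will take the schematic form $C(\epsilon+\delta) + C\,h^2/\epsilon + C\,h/\sqrt{\epsilon} + \cdots$, with coefficients involving $|M|_g$, $\ct_0$, $\ct_1$, $\ct_2$. Balancing the competing terms in $\epsilon$ (and $\delta \sim \epsilon$) is what forces the optimal choice $\epsilon \sim h^{1/2}$ and yields the limiting $h^{1/4}$ rate, exactly as in \cite{CCL2}; the precise algebraic form of the three-line bound in the statement, with its distinct $h$, $h^{1/2}$, and $h^{1/4}$ contributions, emerges from grouping the residuals by their dependence on $|M|_g$ and on the data norms. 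I anticipate that the \emph{main obstacle} throughout is the geometric one: ensuring that all the curvature corrections introduced by the intrinsic doubling kernel and by the geodesic-face discretization are genuinely of order $\epsilon$ or $h$ (and not merely $O(1)$), so that the Euclidean error structure of \cite{CCL2} is preserved without degradation of the exponent. The uniform smoothness of $g$ on the compact manifold $M$ is what makes these corrections controllable, and I would isolate the needed geometric estimates (comparison of $d_g$ with local coordinates, bounds on the normals along geodesic faces, and the volume comparison \eqref{IN2}) as preliminary lemmas before entering the main computation.
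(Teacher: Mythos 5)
Your overall strategy is the one the paper follows: a Kuznetsov-type approximation inequality obtained by doubling of variables with an intrinsic kernel $\varphi(d_g(x,x')^2/\epsilon^2)$, a ``lack of symmetry'' term coming from the explicit $x$-dependence of the flux (handled in the paper by parallel transport and a Gauss--Green cancellation), control of the entropy production of $u^h$ via the discrete entropy inequalities of the monotone scheme, and a final optimization in $\epsilon$ with $\delta\sim\epsilon$. Up to that level of description your plan and the paper's proof coincide.

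There is, however, one genuine gap in the quantitative step, and it is precisely the step that produces the $h^{1/4}$ rate. Your schematic residual bound $C(\epsilon+\delta)+C h^{2}/\epsilon+C h/\sqrt{\epsilon}+\cdots$ does not contain the correct bottleneck term, and the balancing you propose does not give $h^{1/4}$: minimizing $\epsilon+h^{2}/\epsilon$ gives $O(h)$, minimizing $\epsilon+h/\sqrt{\epsilon}$ gives $O(h^{2/3})$, and with your announced choice $\epsilon\sim h^{1/2}$ none of these terms is of order $h^{1/4}$. The missing ingredient is the quadratic entropy dissipation estimate for the scheme,
$$
\sum_{n=0}^{n_T-1}\sum_{\substack{e\in\dK \\ K\in\TT^h}}\frac{|K|}{p_K}\,|e|\,\big|u^{n+1}_{K,e}-u^{n+1}_K\big|^2\;\leq\;\|u_0\|^2_{L^2(M;g)}+C ,
$$
borrowed from \cite{ABL}. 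In the estimate of $E^h_{\delta,\epsilon}(u^h,u)$ one only controls the \emph{sum} $\sum |u^{n+1}_{K,e}-u^{n+1}_K|$ through Cauchy--Schwarz against this quadratic bound, which produces a residual of order $h^{1/2}/\epsilon$ with coefficient $\|u_0\|_{L^2(M;g)}$; this is why the constant $\ct_2$ in the statement involves the $L^2$ norm of the data, a dependence your argument never generates. Balancing $A_2|M|_g^{1/2}h^{1/2}\epsilon^{-1}$ against the $O(\epsilon)$ regularization error forces $\epsilon\sim h^{1/4}$ (not $h^{1/2}$) and yields the $h^{1/4}$ contribution; the $h$ and $h^{1/2}$ contributions then come from the remaining residuals of orders $h$ and $h/\epsilon$. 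Without the quadratic dissipation estimate your plan cannot close with the stated rate and constants.
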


The rest of the present paper will be devoted to the proof of this theorem, which will follow
from a suitable generalization of the arguments introduced earlier 
in Cockburn, Coquel, and LeFloch \cite{CCL2}.

\begin{remark}
1. It is sufficient to establish Theorem~\ref{MT} for smoother initial data. 
When $u_0$ is measurable and bounded on $M$ one can then show the existence of weak solutions 
to the initial value problem \eqref{CON1}-\eqref{CON2} as follows. 
Let $u_0^h \in L^\infty(M) \cap BV(M)$ be such that
$$
  \lim_{h \to 0} u_0^h = u_0 \quad \quad \text{in $L^1(M; g)$}.
$$
Solving the corresponding problem \eqref{CON1}-\eqref{CON2} for the regularized initial data, we 
deduce from Theorem~\ref{MT} that the
approximation solutions $\{u^h\}_{h>0}$ form a Cauchy sequence in
$L^1$. Moreover, in view of the (discrete) maximum principle established later in this paper
and for every $T>0$, these solutions are uniformly bounded in $L^\infty((0,T) \times M)$.
Consequently, there is a function $u \in L^\infty$, such that
$$
  \lim_{h \to 0} u^h = u \quad \quad \text{in the $L^1$ norm.}
$$
Finally, we note that Definition~\ref{ENT.SOL} is stable in the $L^1$ norm.

2. An immediate consequence of the $L^1$-contraction property is
an estimate of the modulus of continuity in time, that is
$$
   \|u^h(t) - u^h(s)\|_{L^1(M; g)} \leq C \; h + C' \;  |t-s|,
$$
where the constants $C>0$, $C' \geq 1$ may depend
on $T$ as well as the metric $g$.
\end{remark}


\subsection{Discrete entropy inequalities}

We will rely on a discrete version of the entropy inequality 
formulated by expressing $u_K^{n+1}$ as a convex combination of essentially one-dimensional schemes. For
each $K \in \TT^h$, $ e\in \dK$, we define
\be 
\label{CC1} \tunnkee := \unk - \frac{\tau p_K}{|K|} \big(
\fek(\unk, \unke)- \fek(\unk, \unk) \big) 
\ee
and set
\be \label{CC2}
   \unnkee := \tunnkee -  \frac{\tau p_K}{|K|} f_K^n,
\ee
where
$$
    f_K^n:= \frac{1}{p_K} \sum _{e\in \del
    K}|e|f_{e,K} (u_K^n,u_K^n).
$$
Therefore, in agreement  with \eqref{FVM} we find 
\be 
\label{CC4}
    u_K^{n+1} = \frac{1}{p_K} \sum _{e\in \dK} |e| \;
    u_{K,e}^{n+1}.
\ee

\begin{remark}
The error estimate will be derived from the family of Kruzkov's
entropies. Recall that any smooth entropy $\eta(u)$
can be recovered by the family of Kruzkov's entropies, that is
$$
  \eta(u) = \frac{1}{2} \int_\RR \partial^2_u \eta(\xi) \: U(u,\xi) \:
  d\xi.
$$
The result follows for any entropy by a standard regularization
argument. Moreover, if $(\eta,q)$ is any convex entropy pair, then
$$
  q_x(u) = \frac{1}{2} \int_\RR \partial^2_u \eta(\xi) \: F_x(u,\xi) \:
  d\xi.
$$
\end{remark}

\

Define the numerical family of Kruzkov's entropy-flux as
\be \label{KRUZN}
  F_{e,K}(u,v,c):= f_{e,K}(u\vee c,v \vee c)- f_{e,K}(u\wedge c,v\wedge
  c).
\ee
Given any convex entropy pair $(U,F)$, define
the numerical entropy-flux $F_{e,K}(u,v)$ associated to $F$
 by
$$
  F_{e,K}(u,v) := \frac{1}{2} \int_\RR \partial^2_u U(\xi) \: F_{e,K}(u,v,\xi) \:
  d\xi.
$$
Hence, from the condition of the discrete flux we see that $F_{e,K}(u,v)$ satisfies
\begin{itemize}
\item{For $u\in \RR$,
$$
    F_{e,K}(u,u) = \dashint_{e} \langle F_y(u), \nek(y)\rangle_g \,
    d\Gamma_g(y).
$$
    }
\item{For $u,v \in \RR$,
$$
    F_{e,K}(u,v)+ F_{e,K_e}(v,u)=0.
$$
    }
\end{itemize}
These properties are inherited from the corresponding properties for the
numerical family of Kruzkov's entropy-flux.

We are in a position to derive the discrete entropy inequalities. 
For each $K \in \TT^h$, $
e\in \dK$ and $u,v \in \RR$, we define
$$
   H_{e,K}(u,v) := u - \varpi_K \big( f_{e,K}(u,v) - f_{e,K}(u,u) \big),
$$
where
$$
  \varpi_K := \frac{\tau p_K}{|K|}.
$$
Hence from \eqref{CC1}, $H_{e,K}(\unk,\unke)=\tunnkee $ and by
definition of $H_{e,K}$, we have
$$
  \partial_u H_{e,K}(u,v) \geq 0, \quad \partial_v H_{e,K}(u,v) \geq
  0.
$$
The last inequality is an immediate consequence of the
monotonicity of $f_{e,K}(u,v)$. The former follows from this
property and the CFL condition. Moreover, we observe that
\be \label{HH2}
    \begin{aligned}
    &H_{e,K}(u \vee \lambda,v \vee \lambda) - H_{e,K}(u \wedge \lambda,v\wedge\lambda)
    \\[5pt]
    &=\Big( u\vee\lambda - \varpi_K \, (f_{e,K}(u\vee\lambda,v\vee\lambda)
    - f_{e,K}(u\vee\lambda,u\vee\lambda)) \Big) \\[5pt]
    & \quad - \Big( u\wedge\lambda -
    \varpi_K \, ( f_{e,K}(u\wedge\lambda,v\wedge\lambda)
    - f_{e,K}(u\wedge\lambda,u\wedge\lambda)) \Big)\\[5pt]
    &=(u\vee\lambda - u\wedge\lambda)
     -\varpi_K \, (f_{e,K}(u\vee\lambda,v\vee\lambda)
      - f_{e,K}(u\wedge\lambda,v\wedge\lambda))  \\[5pt]
    & \quad +\varpi_K \, ( f_{e,K}(u\vee\lambda,u\vee\lambda)
      - f_{e,K}(u\wedge\lambda,u\wedge\lambda)) \\[5pt]
    &= U(u,\lambda) - \varpi_K \, \big(F_{e,K}(u,v,\lambda) -
    F_{e,K}(u,u,\lambda)\big),
   \end{aligned}
\ee where we have used \eqref{KRUZN}. Now, since
$H_{e,K}(u,v)$ is an increasing function in both variables, we
have
$$
  \begin{array}{l}
     H_{e,K}(u,v) \vee H_{e,K}(\lambda,\lambda) \leq
     H_{e,K}(u\vee\lambda,v\vee\lambda), \\[5pt]
     H_{e,K}(u,v) \wedge H_{e,K}(\lambda,\lambda) \geq
     H_{e,K}(u\wedge\lambda,v\wedge\lambda), 
   \end{array}
$$
hence,
\be \label{HH3}
    \begin{aligned}
    &H_{e,K}(u \vee \lambda,v \vee \lambda) - H_{e,K}(u \wedge \lambda,v\wedge\lambda)
    \\[5pt]
    &\geq \big( H_{e,K}(u,v) \vee H_{e,K}(\lambda,\lambda) \big)
    - \big( H_{e,K}(u,v) \wedge
    H_{e,K}(\lambda,\lambda)\big)\\[5pt]
    &= U(H_{e,K}(u,v),\lambda).
   \end{aligned}
\ee
Consequently, from \eqref{HH2},\eqref{HH3} taking $u=\unk$ and
$v=\unke$, we obtain
$$
  U(\tunnkee) - U(\unk) + \frac{\tau p_K}{|K|} \Big( \Fek(\unk, \unke)
    - \Fek(\unk, \unk) \Big) \le 0.
$$
\\
Therefore, we have proved: 

\begin{lemma}[Entropy inequalities for the finite volume scheme] 
\label{Lem1}
 Let $(U,F)$ be a convex entropy pair. Then,
 there exists a family of Lipschitz functions $F_{e,K}: \RR^2 \to \RR$, called numerical
 entropy-flux associated to $F$, satisfying the
 following conditions:
\begin{itemize}
\item{\emph{Consistency property :} for $u\in \RR$,
    \be
    \label{FVM.4}
    F_{e,K}(u,u) = \dashint_{e} \langle F_y(u), \nek(y)\rangle_g \,
    d\Gamma_g(y).
    \ee
    }
\item{\emph{Conservation property :} for $u,v \in \RR$,
    \be
    \label{FVM.5}
    F_{e,K}(u,v)+ F_{e,K_e}(v,u)=0.
    \ee
    }
    \item{\emph{ Discrete entropy inequality:}
    \be
    \label{FVM.6}
    U(\tunnkee) - U(\unk) + \frac{\tau p_K}{|K|} \Big( \Fek(\unk, \unke)
    - \Fek(\unk, \unk) \Big) \le 0.
    \ee
}
\end{itemize}
\end{lemma}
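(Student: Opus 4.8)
The plan is to reduce the assertion to a one-dimensional, purely algebraic statement about the numerical fluxes and then to recover an arbitrary convex entropy by superposing Kruzkov's entropies. The Riemannian geometry enters only through the face average appearing in the consistency relation \eqref{FVM.1}; once the single-face fluxes $f_{e,K}$ are treated as abstract functions of two real variables satisfying \eqref{FVM.2}--\eqref{FVM.3}, the argument runs exactly as in the Euclidean theory of \cite{CCL2}.

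First I would fix a convex entropy pair $(U,F)$ and take as \emph{definition} of the numerical entropy-flux
$$
  F_{e,K}(u,v) := \frac{1}{2}\int_\RR \partial^2_u U(\xi)\, F_{e,K}(u,v,\xi)\, d\xi,
$$
where $F_{e,K}(u,v,\xi)$ is the numerical Kruzkov family \eqref{KRUZN}. Since $\partial^2_u U\geq 0$ by convexity and $F_{e,K}(\cdot,\cdot,\xi)$ is a difference of the Lipschitz flux $f_{e,K}$ evaluated at $u\vee\xi,v\vee\xi$ and $u\wedge\xi,v\wedge\xi$, the map $F_{e,K}$ is Lipschitz. The consistency \eqref{FVM.4} and conservation \eqref{FVM.5} properties are then inherited termwise: setting $u=v$ in \eqref{KRUZN} and integrating against $\tfrac12\partial^2_u U(\xi)\,d\xi$ reproduces the face average of $F_y(u)$ through the integral representation $F_y(u)=\tfrac12\int_\RR \partial^2_u U(\xi)\,F_y(u,\xi)\,d\xi$ recalled above, while the skew-symmetry $F_{e,K}(u,v,\xi)+F_{e,K_e}(v,u,\xi)=0$ (a direct consequence of \eqref{FVM.2}) passes unchanged through the integral.

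The core is the discrete entropy inequality \eqref{FVM.6}. I would work with the map $H_{e,K}(u,v):=u-\varpi_K\big(f_{e,K}(u,v)-f_{e,K}(u,u)\big)$, where $\varpi_K=\tau p_K/|K|$, so that $\tunnkee=H_{e,K}(\unk,\unke)$ by \eqref{CC1} and, crucially, $H_{e,K}(\lambda,\lambda)=\lambda$ for every $\lambda$. The key structural fact is that $H_{e,K}$ is nondecreasing in each argument: monotonicity in the second variable is immediate from $\partial_v f_{e,K}\le 0$ in \eqref{FVM.3}, whereas monotonicity in the first variable combines $\partial_u f_{e,K}\ge 0$ with the CFL condition \eqref{CFL}. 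Granting this, the elementary identity \eqref{HH2} rewrites $H_{e,K}(u\vee\lambda,v\vee\lambda)-H_{e,K}(u\wedge\lambda,v\wedge\lambda)$ as $U(u,\lambda)-\varpi_K\big(F_{e,K}(u,v,\lambda)-F_{e,K}(u,u,\lambda)\big)$, while monotonicity together with $H_{e,K}(\lambda,\lambda)=\lambda$ bounds the same quantity below by $U(H_{e,K}(u,v),\lambda)$, which is \eqref{HH3}. Taking $u=\unk$, $v=\unke$ and chaining the two relations produces, for every $\lambda\in\RR$, the Kruzkov-level inequality $U(\tunnkee,\lambda)-U(\unk,\lambda)+\varpi_K\big(F_{e,K}(\unk,\unke,\lambda)-F_{e,K}(\unk,\unk,\lambda)\big)\le 0$.

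Finally, I would multiply this Kruzkov inequality by $\tfrac12\partial^2_u U(\xi)\ge 0$, put $\lambda=\xi$, and integrate over $\xi\in\RR$; invoking $U(w)=\tfrac12\int_\RR\partial^2_u U(\xi)\,U(w,\xi)\,d\xi$ and the definition of $F_{e,K}(u,v)$ above, this yields precisely \eqref{FVM.6} for the general convex pair $(U,F)$, with $F_{e,K}$ the flux constructed in the second step. I expect the one genuinely delicate point to be the first-variable monotonicity of $H_{e,K}$, which does \emph{not} follow from \eqref{FVM.3} alone: the negative contribution of $-\varpi_K\,\partial_u f_{e,K}$ to $\partial_u H_{e,K}$ must be absorbed, and it is exactly the CFL condition \eqref{CFL}, through the bound $\varpi_K\,\Lip(f)\le 1$, that guarantees $\partial_u H_{e,K}\ge 0$ uniformly in $K$. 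Everything else is bookkeeping with the integral representation and the monotone-flux inequalities.
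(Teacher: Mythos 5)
Your proposal is correct and follows essentially the same route as the paper: the same superposition of Kruzkov entropies to define $F_{e,K}(u,v)$, the same monotone map $H_{e,K}$ with monotonicity secured by \eqref{FVM.3} and the CFL condition, and the same combination of the identity \eqref{HH2} with the lower bound \eqref{HH3}. If anything, you are slightly more explicit than the paper in recording that $H_{e,K}(\lambda,\lambda)=\lambda$ and in carrying out the final integration against $\tfrac12\partial_u^2 U(\xi)\,d\xi$ to pass from the Kruzkov-level inequality to a general convex entropy.
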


From \eqref{CC2} and \eqref{FVM.6}, we can write the discrete
entropy inequality in terms of $\unnkee$ and $\unk$, that is
\be \label{FVM.7}
       U(\unnkee) - U(\unk) +
       \frac{\tau p_K}{|K|} \Big( \Fek(\unk, \unke)
       - \Fek(\unk, \unk) \Big) \le \dnnkee,
\ee where $\dnnkee = U(\unnkee) - U(\tunnkee)$.

\

To end this section we also recall the discrete maximum principle
 established in Amorim, Ben-Artzi and LeFloch \cite{ABL}: for $n=0,1,\ldots,n_T$,
$$
   \max_{K \in \TT^h} |u^n_K| \leq \big(\tilde{C_0}(T) +
   \max_{K\in \TT^h} |u^0_K| \big) \: \tilde{C_0'}(T),
$$
for some constants $\tilde{C_0}(T), \tilde{C_0'}(T) >0$.

\section{Derivation of the error estimate}

\subsection{Fundamental inequality}

From now on it will be convenient to use the notation $Q_T := [0,T]\times M$.  
In this section we derive a basic
approximation inequality on the manifold $M$, that is, we derive a
generalization of the Kuznetsov's approximation inequality for the $L^1$ distance 
$$
   \|u^h(T) - u(T)\|_{L^1(M; g)}.
$$
Before proceeding we need introduce some special test-functions and make some preliminary
observations. 

Let $\varphi:\RR \to \RR$ be any $C^\infty$ function such that $\text{supp} \varphi \subset [0,1]$, 
$\varphi \geq 0$, and $\int \varphi = 1$.  
For each $t'
\in \RR$, $x' \in M$ be fixed, each $\delta, \epsilon > 0$ and
all $t \in \RR$, $x \in M$, we define
\be 
\label{STF}
    \rho_{\delta}(t;t'):= \frac{1}{\delta} \: \varphi\Big(
    \frac{|t-t'|^2}{\delta^2}\Big),
\hspace{20pt}
    \psi_{\epsilon}(x;x'):= \frac{1}{\epsilon^n} \: \varphi\Big(
    \frac{(d_g(x,x'))^2}{\epsilon^2}\Big),
\ee
where we use the Riemannian distance. Observe that
$\rho_{\delta}(t;t')=\rho_{\delta}(t';t)$, 
$\psi_{\epsilon}(x;x')=\psi_{\epsilon}(x';x)$, and  
$$
   \int_\RR \rho_{\delta}(t;t')\, dt = 1, \qquad
    \int_M \psi_{\epsilon}(x;x') \,  dv_g(x) = 1.
$$

Clearly, $\psi_{\epsilon}$ is a Lipschitz function on $M$ with
compact support contained in the geodesic ball of radius
$\epsilon$, hence $\psi_{\epsilon} \in W^{1,\infty}(M)$ and
by  Rademacher's theorem~\cite{EG} it is differentiable
almost everywhere. Moreover, there exists a constant $C>0$, such
that for $\mathcal{L}^1-$a.e. $t \in \RR$ and $\mathcal{H}^n-$a.e.
$x \in M$
\be \label{ETF}
\begin{aligned}
   &|\rho_{\delta}(t;t')| \leq \frac{C}{\delta},
   \qquad
   |\partial_t \rho_{\delta}(t;t')| \leq \frac{C}{\delta^2},
\\[5pt]
   &|\psi_{\epsilon}(x;x')| \leq \frac{C}{\epsilon^n},
   \qquad
   |\nabla_g \psi_{\epsilon}(x;x')| \leq \frac{C}{\epsilon^{n+1}}.
\end{aligned}
\ee
Let $v:M \to \RR$ be a locally integrable function on $M$. We
may assume only on some $N \subset M$ and to be extended by zero
on $M \backslash N$. Therefore, there exists a sequence of
smooth functions $\{v^m\}$ defined on $M$, such that
$$
  \lim_{m \to \infty} v^m = v \qquad \text{on $L^1(M; g)$}.
$$

As in Kruzkov \cite{K}, in the case of manifolds we can establish the 
following approximation result. 

\begin{lemma} \label{KL}
Given a bounded and measurable function $v: M \to \RR$ and
$\epsilon>0$, the function 
$$
  V_\epsilon := \int_M \int_M \psi_\epsilon(x;x') \: |v(x) -
  v(x')| \: dv_g(x) \, dv_g(x')
$$
satisfies 
$$
  \lim_{\epsilon \to 0} V_\epsilon = 0.
$$
\end{lemma}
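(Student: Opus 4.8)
The plan is to follow the classical three-step strategy behind Kruzkov's mollification estimate, adapted to the compact manifold $(M,g)$. Let me write $V_\epsilon = V_\epsilon[v]$ to emphasize the dependence on the integrand, so that
$$
  V_\epsilon[v] = \int_M \int_M \psi_\epsilon(x;x') \, |v(x) - v(x')| \, dv_g(x)\, dv_g(x').
$$
Since $M$ is compact and $v$ is bounded, $v \in L^1(M;g)$ and $V_\epsilon[v]$ is finite. The two geometric ingredients that replace Euclidean translation invariance are the symmetry $\psi_\epsilon(x;x') = \psi_\epsilon(x';x)$ and the normalization $\int_M \psi_\epsilon(x;x')\, dv_g(x) = 1$, valid for every fixed $x'$ and hence, by symmetry, also with the roles of $x$ and $x'$ exchanged; together with the compactness of $M$ these are all I will need.

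First I would establish that $v \mapsto V_\epsilon[v]$ is Lipschitz for the $L^1$ norm, uniformly in $\epsilon$. For any two functions $v,w$ the reverse triangle inequality gives
$$
  \big| \, |v(x)-v(x')| - |w(x)-w(x')| \, \big| \leq |v(x)-w(x)| + |v(x')-w(x')|,
$$
and integrating against $\psi_\epsilon \geq 0$ and using the normalization in each slot separately — this is where the symmetry of $\psi_\epsilon$ together with Fubini's theorem enters — yields
$$
  \big| V_\epsilon[v] - V_\epsilon[w] \big| \leq 2\, \|v - w\|_{L^1(M; g)}.
$$
Next I would treat a smooth $w$. Because $\varphi$ is supported in $[0,1]$, the kernel $\psi_\epsilon(x;x')$ vanishes unless $d_g(x,x') \leq \epsilon$; and on the compact manifold $M$ every smooth $w$ is uniformly continuous, so given $\eta>0$ there is $\epsilon_0>0$ with $|w(x)-w(x')| \leq \eta$ whenever $d_g(x,x') \leq \epsilon \leq \epsilon_0$. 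Bounding the integrand on the support of the kernel and using $\int_M \psi_\epsilon(x;x')\, dv_g(x) = 1$ gives
$$
  V_\epsilon[w] \leq \eta \int_M \Big( \int_M \psi_\epsilon(x;x')\, dv_g(x) \Big) dv_g(x') = \eta\, |M|_g,
$$
so that $V_\epsilon[w] \to 0$ as $\epsilon \to 0$.

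Finally, for the general bounded measurable $v$ I would invoke the density of smooth functions in $L^1(M;g)$ recalled just above the lemma: given $\eta>0$, choose a smooth $w$ with $\|v-w\|_{L^1(M;g)} < \eta$. Combining the two previous steps,
$$
  \limsup_{\epsilon \to 0} V_\epsilon[v] \leq 2\,\|v-w\|_{L^1(M;g)} + \limsup_{\epsilon \to 0} V_\epsilon[w] \leq 2\eta,
$$
and letting $\eta \to 0$ gives $\lim_{\epsilon \to 0} V_\epsilon[v] = 0$.

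The step requiring the most care is the stability estimate, or more precisely the justification that the normalization may be used in \emph{both} integration variables. In $\RR^n$ this is automatic from translation invariance, whereas here it rests entirely on the symmetry $\psi_\epsilon(x;x') = \psi_\epsilon(x';x)$ of the distance-based kernel. I would also remark that the exact normalization on a curved manifold is the genuinely geometric point; should it hold only up to a factor $1 + O(\epsilon^2)$ uniformly in $x'$ (because of volume distortion), the argument is unchanged, with the constant $2$ and the factor $|M|_g$ replaced by $2(1+O(\epsilon^2))$ and $(1+O(\epsilon^2))|M|_g$, both of which are harmless in the limit $\epsilon \to 0$.
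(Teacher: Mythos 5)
Your proof is correct and follows essentially the same strategy as the paper's: treat smooth functions first (where the kernel's support in a geodesic $\epsilon$-ball forces $V_\epsilon \to 0$), then pass to general bounded measurable $v$ by $L^1$-density of smooth functions. The only differences are cosmetic — you use uniform continuity where the paper uses the gradient bound $|v(x)-v(x')| \leq \|\nabla_g v\|_\infty\, d_g(x,x')$ along a minimizing geodesic, and you make explicit the stability estimate $|V_\epsilon[v]-V_\epsilon[w]| \leq 2\|v-w\|_{L^1(M;g)}$ that the paper dismisses as a ``routine approximation argument.''
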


\begin{proof}
First, consider the case that $v$ is smooth on $M$. Let
$x$, $x'$ be two points on $M$ and, $\gamma:[0,1] \to M$ be a
minimizing geodesic with $\gamma(0)=x$ and $\gamma(1)=x'$.
Therefore, for some $\xi \in (0,1)$ one can write 
$$
   \begin{aligned}
   |v(x) - v(x')| &= |v(\gamma(1)) - v(\gamma(0))|
    = \big|\frac{d}{dt}(v \circ \gamma)(\xi)\big|
   \\
   &= |\langle du(\gamma(\xi)),\big(\frac{d\gamma}{dt}\big)(\xi)\rangle|
    \leq |\nabla_g v(\gamma(\xi))| \: \big|\frac{d\gamma}{dt}(\xi)\big|
   \\
   & \leq \|\nabla_g v\|_\infty \: d_g(x,x').
   \end{aligned}
$$

Then, applying the inequality above and using \eqref{ETF}, we
obtain
$$
   \begin{aligned}
   V_\epsilon &= \int_M \int_M \psi_\epsilon(x;x') \: |v(x) -
  v(x')| \: dv_g(x') \, dv_g(x)
  \\
  & \leq \int_M \frac{C}{\epsilon^n} \: \|\nabla_g v\|_\infty V_g\big(B_x(\epsilon)\big) \: \epsilon \, dv_g(x)
  \\
  & \leq C \: |M| \: \|\nabla_g v\|_\infty \: \epsilon,
   \end{aligned}
$$
where $V_g\big(B_x(\epsilon)\big)$ denotes the volume of the
geodesic ball of center $x$ and radius $\epsilon$, and the
constant $C>0$ does not depend on $\epsilon$.
 
Finally, suppose that $v$ is measurable and bounded on $M$.
Since $M$ is compact, $v$ is integrable on $M$ and there
exists a sequence of smooth functions $\{v^m\}$ defined on $M$
converging to $v$ in $L^1(M; g)$. We then conclude with a routine approximation argument.
\end{proof}

For each $p,p' \in Q_T$, $p:=(t,x)$, $p':=(t',x')$, we consider
the special test function $\phi$ defined by
$$
     \phi(p\,;p'):= \rho_{\delta}(t;t') \: \psi_{\epsilon} (x;x').
$$
Therefore, as $\delta, \epsilon \to 0$,
 the support of $\phi$ is concentrated on the set
$\{p =  p'\}$. 
For convenience,
we introduce the following piecewise approximation of the
functions $\rho_{\delta}$ and $\psi_{\epsilon}$. For
$n=0,1,\ldots$, we define $\tilde{\rho}(t;t')$,
$\tilde{\rho}'(t;t')$ as
\be \label{PAF1}
    \begin{aligned}
    & \tilde{\rho}_{\delta} (t;t') := \rho_{\delta}(t_{n+1};t') \qquad
    ( t \in [t_n, t_{n+1})),
\\[5pt]
   &\tilde{\rho}'_{\delta} (t;t') := \rho_{\delta}(t;t'_{n+1}) \qquad
    ( t' \in [t'_n, t'_{n+1})),
\end{aligned}
\ee
and for all $K \in \TT^h$, we define $\tilde{\psi}_{\epsilon}
(x;x')$, $\tilde{\psi}_{\epsilon}'(x;x')$ by the averages of
${\psi}_{\epsilon} (x;x')$ along each interface, that is
\be \label{PAF2}
\begin{aligned}
    &\tilde{\psi}_{\epsilon} (x;x') :=  \dashint_{\dK}
    \psi_{\epsilon}(y;x')\, d\Gamma_g(y) \quad ( x \in K, x' \in M ),
    \\[5pt]
    &\tilde{\psi}'_{\epsilon} (x;x') := \dashint_{\dK}
    \psi_{\epsilon}(x;y') \,d\Gamma_g(y') \quad ( x \in M, x' \in
    K).
\end{aligned}
\ee

The following estimate will be useful.

\begin{lemma} 
\label{KL1}
The functions  ${\psi}_{\epsilon}$, $\tilde{\psi}_{\epsilon}$ be defined by \eqref{STF}, and
 \eqref{PAF2}, respectively, satisfy the estimate 
$$
\sup_{x \in M}   \int_M |\tilde{\psi}_{\epsilon} (x;x') - {\psi}_{\epsilon}
  (x;x')| \: dv_g(x') \leq C \; \frac{h}{\epsilon},
$$
where $C>0$ does not depend on $\epsilon, h>0$.
\end{lemma}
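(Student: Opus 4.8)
The plan is to exploit two facts: first, that $\psi_\epsilon(\cdot\,;x')$ is globally Lipschitz with constant of order $\epsilon^{-(n+1)}$, as recorded in \eqref{ETF}; and second, that every point $y \in \dK$ over which we average lies within the diameter $h_K \le h$ of the base point $x \in K$, so that replacing $x$ by such a $y$ displaces us a geodesic distance at most $h$. Fix $x \in M$ and let $K \in \TT^h$ be the (essentially unique) element containing it. Since $\dashint_{\dK} d\Gamma_g(y) = 1$, I would first rewrite the difference as a single average,
$$
  \tilde\psi_\epsilon(x;x') - \psi_\epsilon(x;x')
  = \dashint_{\dK}\big(\psi_\epsilon(y;x') - \psi_\epsilon(x;x')\big)\,d\Gamma_g(y),
$$
so that it suffices to control, uniformly in $y \in \dK$, the inner difference.

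Next I would estimate this inner difference pointwise. Because $\psi_\epsilon(\cdot\,;x') \in W^{1,\infty}(M)$ with $|\nabla_g \psi_\epsilon(\cdot\,;x')| \le C\epsilon^{-(n+1)}$ almost everywhere, integration of the gradient along a minimizing geodesic joining $x$ to $y$ (exactly as in the proof of Lemma~\ref{KL}) gives
$$
  |\psi_\epsilon(y;x') - \psi_\epsilon(x;x')|
  \le \frac{C}{\epsilon^{n+1}}\, d_g(x,y)
  \le \frac{C}{\epsilon^{n+1}}\, h_K
  \le \frac{C\,h}{\epsilon^{n+1}},
$$
using $d_g(x,y) \le h_K \le h$ for $y \in \dK$, $x \in K$. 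Averaging over $\dK$ preserves this pointwise bound for $|\tilde\psi_\epsilon(x;x') - \psi_\epsilon(x;x')|$, valid for every $x'$.

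The decisive step is to observe that this pointwise bound is paid only on a small set. The function $\psi_\epsilon$ vanishes unless its two arguments lie within geodesic distance $\epsilon$; hence the integrand above is nonzero only when $d_g(x,x') \le \epsilon$ or $d_g(y,x') \le \epsilon$ for some $y \in \dK$, and in either case $d_g(x,x') \le \epsilon + h$. Thus the $x'$-integration is confined to the geodesic ball $B_x(\epsilon + h)$, whose volume is at most $C(\epsilon+h)^n \le C\,\epsilon^n$ in the relevant regime $h \le \epsilon$. Combining this with the pointwise estimate yields
$$
  \int_M |\tilde\psi_\epsilon(x;x') - \psi_\epsilon(x;x')|\,dv_g(x')
  \le \frac{C\,h}{\epsilon^{n+1}}\cdot C\,\epsilon^n
  = \frac{C\,h}{\epsilon},
$$
with $C$ independent of $x$, $\epsilon$, and $h$; taking the supremum over $x$ completes the argument.

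The main obstacle is the tension between the near-singular scaling $\epsilon^{-(n+1)}$ of the Lipschitz constant and the localization of the support: a naive estimate would integrate $\epsilon^{-(n+1)}h$ over all of $M$ and overshoot by a factor $|M|_g/\epsilon^n$, so the volume localization to $B_x(\epsilon+h)$ is precisely what recovers the correct $h/\epsilon$ rate. Two technical points then require care. First, $\psi_\epsilon$ is only Lipschitz (differentiable merely almost everywhere by Rademacher's theorem), and $d_g(\cdot\,;x')$ is non-smooth across the cut locus, so the mean-value step must be carried out as an integration of the a.e.\ gradient along the geodesic, as in Lemma~\ref{KL}, rather than through a classical derivative. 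Second, all constants must remain uniform in $x$ and across the triangulation, which uses only the global bound $h_K \le h$ together with a uniform control on the volume of geodesic balls of radius comparable to $\epsilon$ on the compact manifold $M$.
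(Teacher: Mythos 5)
Your argument is correct and follows essentially the same route as the paper: a pointwise bound $|\tilde\psi_\epsilon(x;x')-\psi_\epsilon(x;x')|\le C\,h\,\epsilon^{-(n+1)}$ obtained from the Lipschitz/gradient estimate \eqref{ETF} along a minimizing geodesic, followed by localization of the $x'$-integration to a geodesic ball of radius comparable to $\epsilon$, whose volume $O(\epsilon^n)$ recovers the rate $h/\epsilon$. Your explicit remarks on the enlarged support radius $\epsilon+h$ and the regime $h\le\epsilon$ are slightly more careful than the paper's, but do not change the argument.
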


\begin{proof} Given $K$, let $x',x$ be two points on $M$ and $K$,
respectively. Analogously to the proof of Lemma \ref{KL}, we
could write
$$
\begin{aligned}
  |\tilde{\psi}_{\epsilon} (x;x') - {\psi}_{\epsilon}
  (x;x')| &\leq \dashint_{\dK}
  |\psi_{\epsilon}(y;x')-\psi_{\epsilon}(x;x')| \: d\Gamma_g(y)
  \\
  & \leq |\nabla_g \psi_{\epsilon}(c;x')| \: h \leq C {h \over \epsilon^{n+1}}, 
\end{aligned}
$$
where
$$
  |\nabla_g \psi_{\epsilon}(c;x')| = \sup_{y \in \dK} |\nabla_g
  \psi_{\epsilon}(y;x')|.
$$

Now, we integrate the above inequality on $M$ and obtain 
$$
\begin{aligned}
  \int_M |\tilde{\psi}_{\epsilon} (x;x') - {\psi}_{\epsilon}
  (x;x')| \: dv_g(x') 
  & \leq \frac{C}{\epsilon^{n+1}} \: h \: V_g\big(B_c(\epsilon)\big)
  \leq C \: \frac{h}{\epsilon}.
\end{aligned}
$$
\end{proof}

Moreover, we define the corresponding approximations $\phi^h$,
$\phi^{h'}$, $\del_t^h \phi$, $\del_{t}^{h'}\phi$, $\nabla_g^h\phi$
and $\nabla_g^{h'}\phi$ of the exact value, time derivative and
covariant derivative of the function $\phi$, respectively:
$$
\aligned 
     & \phi^h(p \,; p'):=  \tilde{\rho}_{\delta}(t;t') \;
     {\psi}_{\epsilon}(x;x'),
\qquad 
\phi^{h'}(p \,; p'):= \tilde{\rho}'_{\delta}(t;t') \;
    {\psi}_{\epsilon}(x;x'),
\\
     & \del_t^h\phi(p \,; p'):= \del_t \rho_{\delta}(t;t') \;
     \tilde{\psi}_{\epsilon}(x;x'),
\qquad   \del_t^{h'}\phi(p \,; p'):= \del_{t'} \rho_{\delta}(t;t') \;
   \tilde{\psi}'_{\epsilon}(x;x'),
\endaligned
$$
and
$$
 \begin{aligned}
  &\nabla_g^h\phi(p \,; p'):= \tilde{\rho}_{\delta}(t;t') \;
     \nabla_g{\psi}_{\epsilon}(x;x'),
\\
&    \nabla_g^{h'}\phi(p \,; p'):=
     \tilde{\rho}'_{\delta}(t;t') \;
      \nabla_g'{\psi}_{\epsilon}(x;x').
\end{aligned}
$$
Analogously, for convenient we introduce a piecewise constant
approximation of the exact solution $u$, that is \be
\label{A.E.S.}
       \tilde{u}(t,x):= u(t_n,x), \qquad  (t \in [t_n,t_{n+1}), x \in M).
\ee

\begin{remark}
1. Note that $u$ represents the entropy solution to problem
\eqref{CON1}-\eqref{CON2}, while $u^h$ denotes the
piecewise-constant approximate solution \eqref{PCF} given by the
schema \eqref{FVM}. By definition we have $\tilde{u}^h = u^h$.

2. The zero-order approximations of the test function $\phi$, that
is, $\phi^h$ and $\phi^{h'}$ are due to the explicit dependence of
the flux function with the spacial variable.

3. One denotes by  $\partial_{t'}$, $\nabla_g'$ respectively the time
derivative and covariant derivative with respect to $t'$ and $x'$
variables.
\end{remark}

Next, let us define the approximate entropy dissipation form 
$$
   E_{\delta, \epsilon}^h(u,u^h):= \iint _{Q_T} \Theta_{\delta,
   \epsilon}^h(u,u^h(t',x');t',x')\, dv_g(x') dt',
$$
where
\be \label{MED}
\begin{aligned}
\Theta_{ \delta,\epsilon}^h & (u,u^h(t',x');t',x')
 \\ &= -\iint_{Q_T} \Big(|\tilde{u}(t,x) - u^h(t',x')| \,
 \del_t^h\phi
\\[5pt]
&  + \big(\sgn(u(t,x)- u^h(t',x') )\langle f(u(t,x),x) -
f(u^h(t',x'),x),\nabla_g^h\phi\rangle \big)  \Big) \, dv_g(x) dt
\\[5pt]
& +\iint _{Q_T} \sgn(u(t,x)-u^h(t',x')) \; (\dive f)(u^h(t',x'),x)
\; \phi^h \, dv_g(x) dt
\\[5pt]
& -  \int_M |u(0,x)- u^h(t',x')| \phi(0) \, dv_g(x)
 + \int_M |u(T,x)- u^h(t',x')| \phi(T) \, dv_g(x).
\end{aligned}
\ee
Here, the term $\Theta_{\delta,\epsilon}^h(u,u^h;t',x')$ is a
measure of the entropy dissipation associated with the entropy
solution $u$. Observe that $\tilde{u}$ defined by \eqref{A.E.S.}
appears in the first term of the right-hand side of \eqref{MED}.
This is due to the fact that the time derivative of $u^h$ needs
special treatment, as was observed in \cite{CCL1}.

Analogously, reversing the role of $u$ and $u^h$, we
define 
$$
   E_{\delta, \epsilon}^h(u^h,u) := \iint _{Q_T} \Theta_{\delta,
   \epsilon}^h(u^h,u(t,x);t,x)\, dv_g(x) dt,
$$
where
$$
\begin{aligned}
\Thuhu
 \\ := & -\iint_{Q_T} \Big(| u^h(t',x')- u(t,x) |\del  _t^ {h'} \phi
\\
&+ \big(\sgn(u^h(t',x') -u(t,x) ) \langle f (u^h(t', x'),x') - f (
u(t,x),x'),\nabla_g ^{h'} \rangle \big) \Big)\, dv_g(x') dt'
\\
& + \iint _{Q_T} \sgn(u^h(t',x') -u(t,x)) \; (\dive' f)(u(t,x),x')
\; \phi^{h'} \, dv_g(x') dt'
\\
& -  \int_M |u^h(0,x')- u(t,x)| \phi(0) \, dv_g(x') + \int_M
|u^h(T,x')- u(t,x)| \phi(T) \, dv_g(x').
\end{aligned}
$$

Observing that $\partial_{t'}\rho_\delta(t;t') =
-\partial_{t}\rho_\delta(t;t')$, $\nabla_g' \psi_\epsilon(x;x')=
-\nabla_g \psi_\epsilon(x;x')$ and adding the terms $E_{\delta,
\epsilon}(u,u^h)$ and $E_{\delta, \epsilon}(u^h,u)$, we get the
following decomposition:
\be
 \label{DECOMP1}
   \Euuh+ \Euhu = \Ruuh - \Suuh,
\ee
where
\be \label{DECOMP2}
\begin{aligned}
 \Ruuh := & \iint _{Q_T}\int_{M} |u^h(T,x')-u(t,x)| \; \phi(t,x;T,x')
dv_g(x') dv_g(x) dt
\\
& + \iint _{Q_T}\int_{M} |u^h(t',x')-u(T,x)| \; \phi(T,x;t',x')
dv_g(x) dv_g(x')dt'
\\
& - \iint _{Q_T}\int_{M} |u^h(0,x')-u(t,x)| \; \phi(t,x;0,x')
dv_g(x') dv_g(x) dt
\\
& - \iint _{Q_T}\int_{M} |u^h(t',x')-u(0,x)| \; \phi(0,x;t',x')
dv_g(x)dv_g(x') dt',
\end{aligned}
\ee and \be \label{DECOMP3}
\begin{aligned}
& \Suuh
\\
:= 
& \iint _{Q_T}\iint _{Q_T} \Big(|\tilde{u}(t,x)-u^h(t',x')|
 \; \tilde {\psi}_{\epsilon}(x;x') \\
 &\hspace{30pt} -|u(t,x)-u^h(t',x')| \; \tilde{\psi}'_{\epsilon}(x;x') \Big)
\, \del_t \rho_{\delta}(t;t') \, dv_g(x') dt' dv_g(x) dt
\\
& + \iint _{Q_T}\iint _{Q_T} \sgn(u(t,x)- u^h(t',x'))\\
& \hspace{30pt} \Big( \big\langle \big(f(u(t,x),x) -
f(u^h(t',x'),x)\big)\;
\tilde{\rho}_{\delta}(t;t') \\
& \hspace{40pt} - \big(f(u(t, x),x') -
f(u^h(t', x'),x')\big) \; \tilde{\rho}'_{\delta}(t;t'), \nabla_g \psi_{\epsilon}(x;x') \big\rangle_g \\
& + (\dive' f)(u(t,x),x') \phi^{h'} - (\dive f)(u^h(t',x'),x)
\phi^h \Big) \, dv_g(x') dt' dv_g(x) dt.
\end{aligned}
\ee
Passing to the limit as $\delta,\epsilon \to 0$, we expect that
$R_{\delta,\epsilon}^h(u,u^h)$ converges to
$$
\int_{M} |u^h(T,x)-u(T,x)| \; dv_g(x) - \int _{M}
|u^h(0,x)-u(0,x)| \; dv_g(x),
$$
and, if $\tilde{u}$ is replaced by $u$ and the exact differentials
in time-space are used, then the term
$S_{\delta,\epsilon}^h(u,u^h)$ is expected to converge to zero.

Finally, we obtain the basic approximation inequality which is derived as  
 a lower bound for the term $\Ruuh$.

\begin{proposition}[Basic approximation inequality]  
The $L^1$ distance between the approximate and the exact solution satisfies 
\label{PBAI}
$$
\begin{aligned}
& \int_M  |u^h(T,x)-u(T,x)| \; dv_g(x)
\\
& \leq C \; \int_{M} |u^h(0,x)-  u(0,x)|dv_g(x)
\\
& \quad + C \big(1 + \TV_g(u_0)\big) \: (\epsilon + \delta) 
\\
& \quad + C \; \sup_{0 \leq t \leq T} \Big( S_{\delta, \epsilon}^h(u,u^h)
+ E_{\delta, \epsilon}^h(u,u^h)  + E_{\delta,
\epsilon}^h(u^h,u)\Big),
\end{aligned}
$$
where the constant $C>0$ may 
depend on $T$ and also on the metric $g$, but do not depend on
$h$, $\epsilon$, $\tau$, and $\delta$.
\end{proposition}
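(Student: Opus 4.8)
The plan is to read everything off the exact decomposition \eqref{DECOMP1}, which I rewrite as
\[
  \Ruuh = \Euuh + \Euhu + \Suuh .
\]
Since the three quantities on the right are precisely those that appear inside the supremum in the statement, the whole content of the Proposition reduces to a single one-sided (lower) bound on the boundary term $\Ruuh$, namely
\[
\begin{aligned}
  \Ruuh \ge{}& \int_M |u^h(T,x)-u(T,x)|\,dv_g(x) - C\int_M |u^h(0,x)-u(0,x)|\,dv_g(x) \\
  &{}- C\,\big(1+\TV_g(u_0)\big)\,(\epsilon+\delta).
\end{aligned}
\]
Feeding this into the identity above and absorbing constants yields the asserted inequality; the supremum over $t\in[0,T]$ simply records that the same derivation is valid with the final time $T$ replaced by any intermediate time.

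To obtain this lower bound I would analyze the four boundary integrals constituting $\Ruuh$ in \eqref{DECOMP2}: the two carried at the final time ($t=T$ and $t'=T$) enter with a plus sign and are bounded \emph{from below}, while the two carried initially ($t=0$ and $t'=0$) enter with a minus sign and are bounded \emph{from above}. Take a final-time term, say the one involving $|u^h(T,x')-u(t,x)|$. By the reverse triangle inequality I replace it by $|u^h(T,x)-u(T,x)|$ at the price of $|u^h(T,x')-u^h(T,x)|$ and $|u(t,x)-u(T,x)|$. Integrating the principal part against $\rho_{\delta}(t;T)\,\psi_{\epsilon}(x;x')$ and using the normalization $\int_M \psi_{\epsilon}(x;x')\,dv_g(x')=1$ together with the one-sided temporal mass $\int_0^T \rho_{\delta}(t;T)\,dt=\tfrac12$ produces exactly $\tfrac12\int_M|u^h(T)-u(T)|\,dv_g$; the companion term $|u^h(t',x')-u(T,x)|$ supplies the remaining half, so together they reconstruct the full $\int_M|u^h(T)-u(T)|\,dv_g$. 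The two correction terms fall into $C(1+\TV_g(u_0))(\epsilon+\delta)$: the spatial one through the manifold mollification estimate $\int_M \psi_{\epsilon}(x;x')\,|w(x)-w(x')|\,dv_g(x') \le C\,\epsilon\,\TV_g(w)$ (a $BV$ refinement of Lemma~\ref{KL} and Lemma~\ref{KL1}, applied with $w=u^h(T,\cdot)$ and closed by the uniform discrete total-variation bound $\TV_g(u^h(T))\le C\,\TV_g(u_0)$), and the temporal one through the $L^1$-in-time continuity of the entropy solution, $\int_M|u(t,\cdot)-u(T,\cdot)|\,dv_g \le |t-T|\,\sup_s\int_M|\dive f(u(s,\cdot),\cdot)|\,dv_g \le C\,\delta\,\TV_g(u_0)$, which holds on the support $|t-T|\le\delta$ of $\rho_{\delta}$ by the flux-control estimate recalled in the well-posedness theory. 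The two initial-time integrals are treated symmetrically, now with the forward triangle inequality, keeping $\int_M|u^h(0)-u(0)|\,dv_g$ as the principal contribution and shedding the same $(\epsilon+\delta)\,\TV_g(u_0)$ errors.

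The subtle point, and the only place where the time discretization really interferes in this step, is the replacement of $u^h$ at a time $t'\in[T-\delta,T)$ by its endpoint value. Because $u^h$ is piecewise constant in time, the one-sided mollifier $\rho_{\delta}(\,\cdot\,;T)$ only ever sees the previous-step field $u^{n_T-1}$ rather than $u^{n_T}=u^h(T)$, so the discrete modulus of continuity in time $\|u^h(t')-u^h(T)\|_{L^1(M;g)}\le C\,h+C'\,|t'-T|$ must be invoked; this is exactly why the piecewise-constant-in-time surrogate $\tilde u$ of \eqref{A.E.S.} and the piecewise mollifier approximations $\tilde{\rho}_{\delta},\tilde{\psi}_{\epsilon}$ of \eqref{PAF1}--\eqref{PAF2} were introduced, and the residual time-discretization contributions are carried inside $\Suuh$ (to be estimated later) so that they do not pollute the clean $(\epsilon+\delta)$ error.

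I expect the main obstacle to be the bookkeeping rather than any single hard estimate: making the two one-sided half-masses at $t=T$ (and again at $t=0$) add up correctly while simultaneously moving base points with the geodesic mollifier $\psi_{\epsilon}$, whose normalization and gradient behaviour on $(M,g)$ are governed by the geodesic-ball volume comparison rather than by Euclidean translation invariance. Once the mollification errors are confirmed to be genuinely $O(\epsilon\,\TV_g(u_0))$ and $O(\delta\,\TV_g(u_0))$, the lower bound on $\Ruuh$ follows and, combined with the identity \eqref{DECOMP1}, proves the Proposition.
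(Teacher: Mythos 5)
Your overall strategy --- rewriting \eqref{DECOMP1} as $\Ruuh = \Euuh + \Euhu + \Suuh$ and reducing everything to a lower bound on $\Ruuh$ obtained from triangle inequalities applied to the four boundary integrals of \eqref{DECOMP2} --- is exactly the paper's, and your treatment of the terms at $t'=T$ and $t=0$ (the paper's $R_1$ and $R_3$) matches its proof essentially line by line, including the one-sided half-mass of $\rho_\delta$ and the $(\epsilon+\delta)\,\TV_g(u_0)$ mollification errors. The deviation, and the gap, lies in the other two terms. For the term $R_2$ involving $|u^h(t',x')-u(T,x)|$, the paper simply uses $R_2\ge 0$ and accepts that $R_1$ alone produces only $\tfrac12\int_M|u^h(T)-u(T)|$, compensating with an overall factor $2$ at the end; you instead extract the ``other half'' from $R_2$ by replacing $u^h(t',\cdot)$ with $u^h(T,\cdot)$, which forces you to invoke the discrete modulus of continuity $\|u^h(t')-u^h(T)\|_{L^1(M;g)}\le C\,h+C'\,|t'-T|$ and therefore injects an $O(h)$ error that is not present in the statement of Proposition~\ref{PBAI}. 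Your suggestion that this residual can be ``carried inside $\Suuh$'' does not work: $\Suuh$ is the fixed quantity \eqref{DECOMP3}, completely determined by the decomposition, and there is no freedom to park additional error terms in it.

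The same problem recurs, more seriously, in $R_4$. You propose to reduce $|u^h(t',x')-u(0,x)|$ directly to $|u^h(0,x')-u(0,x')|$, which again requires the time modulus of $u^h$ and again costs $O(h)$. The paper deliberately avoids this: it only replaces $u(0,x)$ by $u(t',x')$, at cost $(\epsilon+\delta)\,\TV_g(u_0)$, and keeps the intermediate-time error $\iint\rho_\delta(t')\,|u^h(t',x')-u(t',x')|\,dv_g(x')\,dt'$ on the right-hand side; the resulting integral inequality, valid for every final time, is then closed by Gronwall's inequality --- a step your proposal omits entirely (your reading of the supremum over $t\in[0,T]$ as mere bookkeeping misses that it is what makes the Gronwall closure possible). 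As written, your argument therefore proves a weaker inequality with an extra $+\,C\,h$ on the right-hand side (harmless for Theorem~\ref{MT}, where $O(h)$ terms appear anyway, but not the proposition as stated), and it does so by appealing to the $L^1$-contraction-based time regularity of $u^h$, which the paper's proof never needs. To recover the stated bound, discard $R_2$ by positivity, keep the intermediate-time term coming from $R_4$, and finish with the Gronwall step.
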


\begin{proof}
First, we write \eqref{DECOMP2} as
$$
  \Ruuh= R_1 + R_2 + R_3 + R_4,
$$
with obvious notation. For $R_2$, we simply observe that $R_2 \geq 0$.
To estimate $R_1$,  we consider the following
decomposition
$$
\begin{aligned}
& |u^h(T,x')-u(t,x)| 
\\
= & \: |u^h(T,x')-u(T,x')|
\\
& - \Big( |u^h(T,x')-u(T,x')| - |u^h(T,x')-u(T,x)| \Big)
\\
& - \Big( |u^h(T,x')-u(T,x)|-|u^h(T,x')-u(t,x)| \Big)
\\
\geq & \: |u^h(T,x')-u(T,x')| - |u(T,x)- u(T,x')| - |u(t,x)-
u(T,x)|.
\end{aligned}.
$$
Then, using this decomposition in the expression $R_1$, we have
$$
\begin{aligned}
R_1 
=& \int_{Q}\int_{M} |u^h(T,x')-u(t,x)| \: \phi(t,x;T,x')
dv_g(x') dv_g(x) dt
\\
\geq &\frac{1}{2} \int_{M} |u^h(T,x')-u(T,x') | dv_g(x')
\\
&- \frac{1}{2}\Big( \big(C + C \, \TV_g(u_0)\big)\: \epsilon +
 \big( C + C \, \TV_g(u_0)\big)\: \delta \Big).
\end{aligned}
$$

3. Analogously to item 2, we obtain
$$
\begin{aligned}
R_3  
\geq & -\frac{1}{2}\int_{M} |u^h(0,x')-u(0,x')| \; dv_g(x')
\\
&- \frac{1}{2}\Big( \big(C+C \, \TV_g(u_0)\big)\: \epsilon +
 \big(C+C \, \TV_g(u_0)\big)\: \delta \Big),
\end{aligned}
$$
and
$$
\begin{aligned}
 R_4 
 \geq & - \iint_{Q} \rho_{\delta}(t') |u^h(t',x')-
u(t',x')| \; dv_g(x')dt'
\\
&- \frac{1}{2}\Big( \big(C+C \, \TV_g(u_0)\big)\: \epsilon +
 \big(C + C \, \TV_g(u_0)\big)\: \delta \Big).
\end{aligned}
$$

4. Hence adding all these inequalities, we get
$$
\begin{aligned}
& 2 \Ruuh 
\\
&  \geq \int_{M} |u^h(T,x')-u(T,x')| \; dv_g(x')- \int_{M}
|u^h(0,x')-u(0,x')| \; dv_g(x')
\\
& \quad - 2 \iint_{Q} \rho_{\delta}(t')\; |u^h(t',x')- u(t',x')| \;
dv_g(x')dt'
\\
& \quad - 3 \Big( \big(C +C \, \TV_g(u_0)\big)\: \epsilon +
 \big(C + C \, \TV_g(u_0)\big)\: \delta \Big).
\end{aligned}
$$

5. Finally, by a simple algebraic manipulation we deduce from the above inequality that 
$$
\begin{aligned}
\int_{M}& |u^h(T,x')-u(T,x')| \: dv_g(x') &
\\
& \leq A + 2\iint_{Q} \rho_{\delta}(t')
|u^h(t',x')-u(t',x')| \; dv_g(x')dt',
\end{aligned}
$$
where
$$
\begin{aligned}
A=  &\int_{M} |u^h(0,x')-u(0,x')| \; dv_g(x') \\
 &+ 3 \: \Big( \big(C_1(T)+C_1'(T) \, \TV_g(u_0)\big)\: \epsilon +
 \big(\tilde{C_1}(T)+\tilde{C_1'}(T) \, \TV_g(u_0)\big)\: \delta \Big)
\\
& + 2 \, \big( S_{\delta, \epsilon}^h(u,u^h) +
E_{\delta, \epsilon}^h(u,u^h)  + E_{\delta,
\epsilon}^h(u,u^h)\big).
\end{aligned}
$$
Then, applying the Gronwall's inequality, we get
$$
 \int_{M} |u^h(T,x')-u(T,x')| \; dv_g(x') \leq 3 \;  A.
$$
\end{proof}

\subsection{Dealing with the lack of symmetry}

In this subsection we estimate the lack of symmetry in the term
$\Suuh$.

\begin{proposition}[Estimate of $\Suuh$] The following inequality holds
\label{PSUUH}
$$\label{S}
  \begin{aligned}
  \Suuh \leq & C \, \big(1 + \|u_0\|_{L^\infty} \big)\:
  \frac{h}{\epsilon} \; |M| \\
  &+ C \, \big(1 + TV_g(u_0)\big)\:
  \big(\frac{\tau}{\delta}+ \frac{h}{\epsilon} + \epsilon\big),
\end{aligned}
$$
where the constant $C>0$ may depend on $T$ and the metric $g$, but do not
depend on $h$, $\epsilon$, $\tau$, and $\delta$.
\end{proposition}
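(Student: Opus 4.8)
The plan is to exploit the fact that $\Suuh$ is, by construction, the collection of \emph{mismatch} terms that would cancel identically if the piecewise approximations $\tilde\rho_\delta,\tilde\rho'_\delta,\tilde\psi_\epsilon,\tilde\psi'_\epsilon,\tilde u$ appearing in \eqref{DECOMP3} were replaced by the exact objects $\rho_\delta,\psi_\epsilon,u$ and the flux were evaluated at a single base point. Accordingly I would split $\Suuh=S_{\mathrm{time}}+S_{\mathrm{flux}}$, where $S_{\mathrm{time}}$ is the first double integral (the one carrying $\del_t\rho_\delta$) and $S_{\mathrm{flux}}$ gathers the remaining flux and divergence contributions, and estimate each group against the four elementary ingredients available: the pointwise bounds \eqref{ETF}, the averaging estimate of Lemma~\ref{KL1}, the discrete maximum principle, and the $L^1$ time-regularity of $u$ coming from the total-variation/flux bound recalled in Section~\ref{IN}.

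For $S_{\mathrm{time}}$ I would insert $\pm|u(t,x)-u^h(t',x')|\,\tilde\psi_\epsilon(x;x')$ and write the integrand as
\[
\big(|\tilde u-u^h|-|u-u^h|\big)\,\tilde\psi_\epsilon+|u-u^h|\,\big(\tilde\psi_\epsilon-\tilde\psi'_\epsilon\big).
\]
In the first piece the triangle inequality bounds the bracket by $|\tilde u(t,x)-u(t,x)|=|u(t_n,x)-u(t,x)|$; paired with $\int|\del_t\rho_\delta|\,dt\le C/\delta$ and summed over one time step of width $\tau$, the $L^1$ time-modulus of $u$ (controlled by $\TV_g(u_0)$ via the flux bound) produces the contribution $C(1+\TV_g(u_0))\,\tau/\delta$. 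In the second piece I would use the discrete maximum principle to bound $|u-u^h|\le C(1+\|u_0\|_{L^\infty})$ and Lemma~\ref{KL1} (through $|\tilde\psi_\epsilon-\tilde\psi'_\epsilon|\le|\tilde\psi_\epsilon-\psi_\epsilon|+|\psi_\epsilon-\tilde\psi'_\epsilon|$) to gain the factor $h/\epsilon$; since $\del_t\rho_\delta$ has vanishing time-average, subtracting a reference time before integrating keeps the $t$-integral bounded, and the remaining spatial integration over $M$ yields $C(1+\|u_0\|_{L^\infty})\,(h/\epsilon)\,|M|$.

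The core of the argument is $S_{\mathrm{flux}}$, and here the geometry enters. I would first observe that, because $\psi_\epsilon(x;x')$ depends only on $d_g(x,x')$, one has $\nabla_g'\psi_\epsilon=-\nabla_g\psi_\epsilon$, so the singular factor $|\nabla_g\psi_\epsilon|\sim\epsilon^{-n-1}$ must be removed by a Gauss-Green integration by parts in the $x'$ (resp.\ $x$) variable rather than estimated pointwise, since a naive bound would only give $O(1)$. Integrating by parts transfers the derivative onto the \emph{smooth} flux, turning the singular pairing into $\int(\dive' f)\,\psi_\epsilon$-type expressions in which the intrinsic divergence formula of Section~\ref{CL}, namely $\dive(f(u,x))=du(\del_u f)+(\dive f)(u,x)$, splits the result into a transport part and an explicit-spatial part; the explicit-spatial parts are exactly what the divergence terms $(\dive' f)(u,x')\phi^{h'}-(\dive f)(u^h,x)\phi^h$ in \eqref{DECOMP3} are designed to cancel. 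After this cancellation only genuinely small residuals survive: the mismatch $f(\,\cdot\,,x)-f(\,\cdot\,,x')$ between the two base points is $O(\epsilon)$ by smoothness of $f$ in space on the support $d_g(x,x')\le\epsilon$ (giving, against the BV bound on $u$, the term $C(1+\TV_g(u_0))\,\epsilon$); the mismatch $\tilde\rho_\delta-\tilde\rho'_\delta$ integrates in time to $O(\tau/\delta)$ and contributes $C(1+\TV_g(u_0))\,\tau/\delta$; and the geometric error from replacing the face-averages $\tilde\psi$ by $\psi$ is controlled by Lemma~\ref{KL1} at the cost $h/\epsilon$.

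I expect the integration-by-parts step in $S_{\mathrm{flux}}$ to be the main obstacle. In the Euclidean setting of \cite{CCL2} the flux carries no explicit spatial dependence and the identity $\nabla_x\psi_\epsilon=-\nabla_{x'}\psi_\epsilon$ holds trivially, so the cancellation is immediate; on $(M,g)$ one must instead track the Levi-Civita connection and the variation of the volume density $\sqrt{|g|}$ across the $\epsilon$-ball, verify that Gauss-Green produces no boundary term (compact support of $\psi_\epsilon$), and check that the curvature and commutator terms it generates are absorbed into the $O(\epsilon)$ residual rather than spoiling it. Closing the explicit-divergence cancellation exactly, so that no uncontrolled $\epsilon^{-1}$ factor remains, is the delicate point and is precisely the technical difficulty that the manifold setting introduces beyond \cite{CCL2}.
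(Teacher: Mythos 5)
Your decomposition and your treatment of the time part coincide with the paper's: it also writes $\Suuh = S_1 + S_2$, splits $S_1$ by inserting $\pm\,|u-u^h|\,\tilde\psi_\epsilon$, bounds the $|\tilde u-u|$ piece by the $L^1$ time modulus against $\int|\partial_t\rho_\delta|\,dt\le C/\delta$, and handles the $\tilde\psi_\epsilon-\tilde\psi'_\epsilon$ piece via an integration by parts in $t$ (transferring $\partial_t$ onto $|u-u^h|$, whose time derivative is a measure controlled by $\TV_g(u_0)$) combined with Lemma~\ref{KL1}. Your ``vanishing time-average / subtract a reference time'' device is the same estimate in different clothing; note only that it produces both a $\TV_g(u_0)\,h/\epsilon$ contribution and the boundary $\|u_0\|_{L^\infty}\,|M|\,h/\epsilon$ contribution, not merely the latter.

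The genuine gap is in the flux term. You propose to integrate $\nabla_g\psi_\epsilon$ by parts ``onto the smooth flux,'' but the flux in $S_2$ is evaluated at $u(t,x)$ and $u^h(t',x')$, which are only BV; a direct Gauss--Green in $x$ generates $\dive_x F_x(u(t,x),\cdot)$, i.e.\ the entropy production of $u$, which belongs to $\Euuh$ and must not reappear inside $S_2$. Moreover your accounting of the residual does not close: the geometric mismatch $f_x-f_{x'}$ is indeed $O(d_g(x,x'))=O(\epsilon)$, but it is paired with $\nabla_g\psi_\epsilon=O(\epsilon^{-n-1})$ over a ball of volume $O(\epsilon^n)$, so the naive bound is $O(1)$, not $O(\epsilon)$ --- there is no net gain, and the ``exact cancellation'' against the divergence terms is not an algebraic identity. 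The paper's resolution is more structured: using parallel transport $\sigma_{x'}$ it expands the specific combination $\langle\sigma_{x'}f_x(v)-f_{x'}(v),\nabla_g\psi_\epsilon\rangle+(\dive f)(v,x')\,\psi_\epsilon$ into $\langle(\nabla_g' f)_{x'}(v),\nabla_g\big(k(x;x')\,\psi_\epsilon\big)\rangle+O(\epsilon^2)$, where the extra factor $k(x;x')=O(\epsilon)$ restores integrability ($\|\nabla_g(k\psi_\epsilon)\|_{L^1}=O(1)$); Gauss--Green then annihilates the frozen-coefficient tensor $I\!\!I(t,x';t',x')$ exactly, and only the Lipschitz commutator $I\!\!I(t,x;\cdot)-I\!\!I(t,x';\cdot)=O(\epsilon)$ survives, which yields the $C(1+\TV_g(u_0))\,\epsilon$ term. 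Without exhibiting this combination as an exact derivative of $k\psi_\epsilon$ up to $O(\epsilon^2)$, the cancellation you invoke --- which you yourself flag as the unresolved ``delicate point'' --- does not go through.
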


\begin{proof}  {\bf Step 1.} 
From \eqref{DECOMP3} we write
$$
  \Suuh= S_1 + S_2,
$$
with obvious notation. Consider the decomposition $S_1= S'_1 + S''_1$:
$$
\begin{aligned}
S'_1=  \iint_{Q_T}  \iint_{Q_T}&\Big( |\tilde{u}(t,x)-u^h(t',x')|-
 |u(t,x)-u^h(t',x')| \Big) \\
&  \tilde{\psi}_{\epsilon}(x;x') \: \del_t
\rho_{\delta}(t;t') \,
   dv_g(x')dt' \; dv_g(x) dt,
\end{aligned}
$$
$$
\begin{aligned}
S''_1= \iint_{Q_T}  \iint_{Q_T}  & |u(t,x)-u^h(t',x')| \:
\del_t\rho_{\delta}(t;t') \\
& \Big( \tilde{\psi}_{\epsilon}(x;x')-
\tilde{\psi}'_{\epsilon}(x;x') \Big) \, dv_g(x') dt' \; dv_g(x)
dt.
\end{aligned}
$$
Using \eqref{PAF1},\eqref{PAF2} and the definition of $\tilde{u}$,
it follows that
$$
\begin{aligned}
|S'_1|&\leq \iint_{Q_T} |\tilde{u}(t,x)- u(t,x)| \int_M
\tilde{\psi}_{\epsilon}(x;x') dv_g(x') \int_0^T |
\del_t\rho_{\epsilon}(t;t')| dt' \: dv_g(x) dt
\\
&\leq T \: {C  \over \delta} \sup _{t\in [0,T]} \| \tilde{u} (t) -
u(t)\|_{L^1(M; g)}
 \sup_{K\in \TT^h} \dashint_{\dK} \int_M \psi_{\epsilon} (x;x') dv_g(x') d\Gamma(x)
\\
&\leq C \, T \, {\tau  \over \delta} \, \big(C + C\, \TV_g(u_0) \big).
\end{aligned}
$$
On the other hand, to estimate $S''_1$ we  integrate  by parts
$$
\begin{aligned}
S''_1=& - \iint_{Q_T}  \iint_{Q_T} \sgn(u(t,x)-u^h(t',x')) \:
u_t(t,x) \: \rho_{\delta}(t;t') 
\\
& \hspace{150pt} \Big( \tilde{\psi}_{\epsilon}(x;x')-
\tilde{\psi}'_{\epsilon}(x;x') \Big) \, dv_g(x') dt' \; dv_g(x) dt
\\
&+
 \int_M  \iint_{Q_T}  |u(T,x)-u^h(t',x')| \;  \rho_{\delta}(T;t')
\\
& \hspace{150pt}  \Big(\tilde{\psi}_{\epsilon}(x;x')-
\tilde{\psi}'_{\epsilon}(x;x') \Big) \, dv_g(x') dt' \: dv_g(x)
\\
&- \int_M  \iint_{Q_T}  |u(0,x)-u^h(t',x')| \; \rho_{\delta}(t')
\\
& \hspace{150pt} \Big( \tilde{\psi}_{\epsilon}(x;x')-
\tilde{\psi}'_{\epsilon}(x;x') \Big) \, dv_g(x') dt' \: dv_g(x)
\\
\leq & \iint_{Q_T}  \iint_{Q_T} |u_t(t,x)| \: \rho_{\delta}(t;t')
\: \big| \tilde{\psi}_{\epsilon}(x;x')-
\tilde{\psi}'_{\epsilon}(x;x') \big| \, dv_g(x') dt' \; dv_g(x) dt
\\
&+
 \int_M  \iint_{Q_T}  \Big(|u(T,x)-u^h(t',x')| \rho_{\delta}(T;t') + |u(0,x)-u^h(t',x')| \;
 \rho_{\delta}(t')\Big)
\\
& \hspace{150pt} \big|\tilde{\psi}_{\epsilon}(x;x')-
\tilde{\psi}'_{\epsilon}(x;x') \big| \, dv_g(x') dt' \: dv_g(x).
\end{aligned}
$$
Hence, we conclude that 
$$
|S''_1| \leq  C \:{h\over \epsilon} \Big( T \; \big(C
+ C \, \TV_g(u_0) \big) + |M| \: \big(C + C
\: \|u_0\|_{L^\infty} \big)\Big).
$$
Here, with some abuse of notation we have written $u_t\, dv_g(x')$ to denote the integration with respect 
to the measure $u_t$.  

\

\noindent{\bf Step 2.} Finally, in order to estimate $S_2$ we observe that  
$$
  \begin{aligned}
  S_2 = & \iint _{Q_T}\iint _{Q_T} \sgn(u(t,x)- u^h(t',x'))\\
&   \Big( \big(\big\langle \sigma_{x'}(f_x(u(t,x))-f_{x'}(u(t,x)),\nabla_g
\psi_{\epsilon} \big\rangle_g + (\dive' f)_{x'}(u(t,x))
\psi_{\epsilon} \big)
\tilde{\rho}_{\delta} \\
& - \big(\big\langle \sigma_{x'} ( f_x(u^h(t',x')) - f_{x'}(u^h(t',x'),),\nabla_g
\psi_{\epsilon} \big\rangle_g + (\dive f)_x(u^h(t',x'))
\psi_{\epsilon} \big)
\tilde{\rho}_{\delta} \\
& + \langle f_{x'}(u(t,x)) - f_{x'}(u^h(t',x')),\nabla_g
\psi_{\epsilon} \rangle_g  \big(\tilde{\rho}_{\delta}-\tilde{\rho}_{\delta}' \big)   \\
& - \big(\dive' f\big)_{x'}(u(t,x)) \big(\phi^h - \phi^{h'}\big)
\Big) \, dv_g(x') dt' dv_g(x) dt,
\end{aligned}
$$
where $\sigma_{x'}(f_x(u(t,x))$ is the parallel transport of the vector $f_x(u(t,x))$
from the point $x$ to $x'$. 
We use that $f$ is a smooth vector field on $M$. For $x \in M$ fixed, $u \in \RR$ fixed and $\epsilon>0$
sufficiently small, using the Landau notation $O(\cdot)$ we write
$$
\begin{aligned}
&\langle  \sigma_{x'}(f_x(u)) - f_{x'}(u),\nabla_g \psi_\epsilon(x;x') \rangle_g +
\big(\dive' f\big)(u,x') \: \psi_\epsilon(x;x')
\\
&= \langle \big(\nabla_g' f\big)_{x'}(u)\: k(x;x') ,\nabla_g
\psi_\epsilon(x,x') \rangle_g + \big(\tr \nabla_g' f \big)(u,x')
\: \psi_\epsilon(x,x') + O(d_g^2(x,x'))
\\
&= \langle \big(\nabla_g' f\big)_{x'}(u), \nabla_g \big( k(x;x') \:
\psi_\epsilon(x;x')\big) \rangle_g + O(\epsilon^2),
\end{aligned}
$$
where $k(x;x')$ is the tangent vector at $x$ of the minimizing
geodesic from $x$ to $x'$. Analogously, we have
$$
\begin{aligned}
&\langle  \sigma_{x'}(f_x(u^h)) - f(u^h,x'),\nabla_g \psi_\epsilon(x;x')
\rangle_g + \big(\dive f\big)(u^h,x) \: \psi_\epsilon(x;x')
\\
&= \langle \big(\nabla_g' f\big)(u^h,x'), \nabla_g \big( k(x;x')
\: \psi_\epsilon(x;x')\big) \rangle_g + O(\epsilon^2).
\end{aligned}
$$
Now, denoting the Lipschitz $(1,1)-$tensor field $I\!\!I$ as
$$
I\!\!I(t,x;t',x'):= \big(\nabla_g' F_{x'} \big)(u(t,x),u^h(t',x')),
$$
we have $|S_2| \leq |S'_2| + |S''_2| + O(\epsilon)$, where
$$
  \begin{aligned}
  S'_2 &= \iint _{Q_T}\iint _{Q_T} \big\langle I\!\!I(t,x;t',x'),  \nabla_g \big( k(x;x') \:
\psi_\epsilon(x;x')\big) \big\rangle_g \:  \tilde{\rho}_{\delta}
\,
  dv_g(x') dt'dv_g(x) dt,
  \end{aligned}
$$
$$
  \begin{aligned}
 S''_2 = \iint_{Q_T}& \iint_{Q_T} \sgn(u-u^h)
  \Big( \big\langle f_{x'}(u(t,x)) - f_{x'}(u^h(t',x')),\nabla_g
\psi_{\epsilon} \big\rangle_g
\big(\tilde{\rho}_{\delta}-\tilde{\rho}_{\delta}' \big)
\\
& \hspace{30pt}- \big(\dive' f\big)_{x'}(u(t,x)) \big(\phi^h -
\phi^{h'} \big) \Big) \, dv_g(x') dt' dv_g(x) dt.
  \end{aligned}
$$
The term $S_2''$ is estimated as the final step of item $2$, we
focus on $S_2'$ term. Applying the Gauss-Green formula with
respect to the $x$ variable, it follows that
$$
  \iint _{Q_T} \iint _{Q_T}  \big\langle I\!\!I(t,x';t',x'),
  \nabla_g \big( k(x;x') \:
\psi_\epsilon(x;x')\big) \big\rangle_g \tilde{\rho}_{\delta} \,
  dv_g(x') dt' dv_g(x) dt  =0.
$$
Therefore, subtracting the above expressions from $S_2'$, we
obtain
$$
  |S_2'| \leq C \; T \; \big( C +C \, \TV_g(u_0)\big) \:
  \epsilon,
$$
where, we have used the fact that, due to the compactness of $M$ and 
the regularity of the flux function, 
the function $\nabla_g f(u,x)$ is uniformly Lipschitz continuous for $u$ in a compact set. 
Combining this result with the estimation of $S''_2$, that is
$$
  |S_2''| \leq C \; T \; \Big( \frac{\tau}{\delta} \big(C + C \, \TV_g(u_0)\big)
  + \frac{h}{\epsilon} \big(C + C \|u_0\|_{L^\infty} \big) |M|_g
  \Big),
$$
we complete the proof of the proposition.
\end{proof}
%

\subsection{Entropy production for the exact solution}

We now  consider the approximate entropy dissipation associated with the exact solution. 

\begin{proposition}[Estimate of the quantity $\Euuh$]
 The following inequality holds 
\label{PEUUH}
$$
   \Euuh \leq C \, |M|_g \, \big(1+ \|u_0\|_{L^\infty}\big) +
    C \, \big( 1 + \TV_g(u_0)\big)\Big) \: \frac{h}{\epsilon},
$$
where $C>0$ may depend on $T$ and the metric $g$, but do not
depend on $h$, $\epsilon$, $\tau$, and $\delta$.
\end{proposition}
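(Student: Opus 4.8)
The plan is to read $\Euuh$ as the entropy-dissipation defect of the \emph{exact} solution $u$ measured against the \emph{discretized} test function, and to exploit that this defect vanishes when the exact test function is used. Fix $(t',x')$ and set $c:=u^h(t',x')$, a constant for the $(t,x)$ integration. Since $\phi(\cdot\,;t',x')=\rho_\delta(\cdot\,;t')\,\psi_\epsilon(\cdot\,;x')\ge 0$ is admissible, Kruzkov's entropy inequality from Definition~\ref{ENT.SOL}, written on $[0,T]\times M$ with the boundary contributions at $t=0$ and $t=T$, gives
\[
\begin{aligned}
&\iint_{Q_T}\Big(|u-c|\,\del_t\phi+\langle F_x(u,c),\nabla_g\phi\rangle_g\Big)\,dv_g\,dt
-\iint_{Q_T}\sgn(u-c)\,(\dive f)(c,x)\,\phi\,dv_g\,dt\\
&\qquad+\int_M|u(0)-c|\,\phi(0)\,dv_g-\int_M|u(T)-c|\,\phi(T)\,dv_g\ \ge\ 0 .
\end{aligned}
\]
Comparing this with $\Theta_{\delta,\epsilon}^h(u,u^h;t',x')$ term by term, the boundary integrals cancel exactly, so $\Theta_{\delta,\epsilon}^h\le \mathcal{D}(t',x')$, where $\mathcal{D}$ collects the defects produced by the three substitutions built into the scheme: $u\mapsto\tilde u$ in the time-derivative term, $\psi_\epsilon\mapsto\tilde\psi_\epsilon$ inside $\del_t^h\phi$, and $\rho_\delta\mapsto\tilde\rho_\delta$ inside $\nabla_g^h\phi$ and $\phi^h$, together with the geometric mismatch coming from evaluating $f(\cdot,x)$ at two different base points $x$ and $x'$. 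Integrating in $(t',x')$ yields $\Euuh\le\iint_{Q_T}\mathcal{D}\,dv_g(x')\,dt'$, and the whole task is to bound this right-hand side.

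For the spatial defect I write the difference of the two time-derivative terms as $|\tilde u-c|\,\del_t\rho_\delta\,(\tilde\psi_\epsilon-\psi_\epsilon)+(|\tilde u-c|-|u-c|)\,\del_t\rho_\delta\,\psi_\epsilon$, handling the $\tilde u$ versus $u$ piece as in the treatment of $S'_1$. The first piece is controlled by Lemma~\ref{KL1}: bounding $|\tilde u-c|\le C(1+\|u_0\|_{L^\infty})$ by the discrete and continuous maximum principles and using $\int_M|\tilde\psi_\epsilon-\psi_\epsilon|\,dv_g(x')\le C\,h/\epsilon$ produces the term $C\,|M|_g(1+\|u_0\|_{L^\infty})\,h/\epsilon$. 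The essential quantitative point, which keeps the estimates from blowing up like $\delta^{-1}$ or $\epsilon^{-1}$, is to use the mean-zero property $\int_\RR\del_t\rho_\delta(t;t')\,dt'=0$ and the normalizations $\int_\RR\rho_\delta\,dt'=\int_M\psi_\epsilon\,dv_g=1$ (so that $\int_0^T(\tilde\rho_\delta-\rho_\delta)\,dt'=0$ as well): subtracting the value of the integrand on the diagonal converts a bare factor $\del_t\rho_\delta$ or $(\tilde\rho_\delta-\rho_\delta)$ into a difference of $u^h$ over a time interval of length $O(\delta)$, whose $L^1_x$ norm is $O(\delta\,\TV_g(u_0))$ by the flux bound. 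Combined with the derivative bounds \eqref{ETF} this leaves only the mesh scale, and with $\tau\le\gamma_1 h$ from \eqref{IN1} the time defects in $\nabla_g^h\phi$ and $\phi^h$ are absorbed into $C(1+\TV_g(u_0))\,h/\epsilon$.

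The genuinely new, manifold-specific difficulty is the flux defect, because $f(u(t,x),x)$ and $f(c,x)$ are vectors in $T_xM$ while the natural comparison point sits at $x'$, so on a curved manifold one cannot simply difference them. I will treat it exactly as in the estimate of $S_2$: transport $f_x$ to $x'$ by the parallel transport $\sigma_{x'}$ and expand, using that $f$ is a smooth vector field and $M$ is compact, so that $\nabla_g f(u,\cdot)$ is uniformly Lipschitz for $u$ in the (compact) range of $u$ and $u^h$. Writing $k(x;x')$ for the initial tangent of the minimizing geodesic, the curvature remainder is $O(d_g^2(x,x'))=O(\epsilon^2)$ on the support of $\psi_\epsilon$, and the first-order term combines with $(\dive' f)\,\psi_\epsilon$ into $\langle(\nabla_g' f)(\cdot),\nabla_g(k\,\psi_\epsilon)\rangle_g+O(\epsilon^2)$; an application of the Gauss--Green formula in the $x$ variable, as for $S'_2$, removes the leading piece and leaves an $O(\epsilon)$ contribution with constant $C(1+\TV_g(u_0))$, dominated by $C(1+\TV_g(u_0))\,h/\epsilon$.

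Collecting the three groups of estimates gives
\[
\Euuh\ \le\ \Big(C\,|M|_g\,(1+\|u_0\|_{L^\infty})+C\,(1+\TV_g(u_0))\Big)\,\frac{h}{\epsilon}.
\]
I expect the main obstacle to be the flux defect of the preceding paragraph: keeping the parallel-transport expansion consistent while simultaneously replacing $\rho_\delta$ by the piecewise-constant $\tilde\rho_\delta$ and applying Gauss--Green on curved elements, and verifying that all curvature remainders are genuinely $O(\epsilon)$ uniformly in $h$, is precisely where the Euclidean argument of \cite{CCL2} must be reworked. The time-discretization bookkeeping (the $\tilde u$ and $\tilde\rho_\delta$ substitutions) is technically delicate but, thanks to the mean-zero cancellations and $\tau\simeq h$, does not affect the final $h/\epsilon$ rate.
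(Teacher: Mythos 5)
Your overall strategy --- test the exact Kruzkov inequality with $c=u^h(t',x')$ and $\phi=\rho_\delta\psi_\epsilon$, then bound $\Euuh$ by the defect between the exact and the discretized test function using Lemma~\ref{KL1} together with the time regularity of $u$ in $L^1$ --- is indeed the paper's strategy, and your dominant term $\big(C|M|_g(1+\|u_0\|_{L^\infty})+C(1+\TV_g(u_0))\big)h/\epsilon$ arises in the same way. However, your term-by-term comparison of $\Theta^h_{\delta,\epsilon}(u,u^h;t',x')$ with the entropy inequality misreads \eqref{MED} on a structural point: there is \emph{no} geometric mismatch in this term. In $\Theta^h_{\delta,\epsilon}(u,u^h;t',x')$ every flux is evaluated at the single base point $x$, namely $f(u(t,x),x)-f(u^h(t',x'),x)$ and $(\dive f)(u^h(t',x'),x)$, which is exactly what the Kruzkov inequality with the constant $c=u^h(t',x')$ produces. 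The parallel-transport and Gauss--Green argument that you present as ``the genuinely new, manifold-specific difficulty'' and ``the main obstacle'' is addressed to a term that is identically absent here; that difficulty is deliberately isolated in $\Suuh$ by the decomposition \eqref{DECOMP1} and is the content of Proposition~\ref{PSUUH}, not of this one.

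Second, the time-discretization defects you propose to estimate separately ($u\mapsto\tilde u$ and $\rho_\delta\mapsto\tilde\rho_\delta$) do not occur in the paper's proof: the entropy inequality is written on each slab $[t_n,t_{n+1}]$ against $\psi_\epsilon$ alone, so the endpoint values $u(t_n,\cdot)$ (i.e.\ $\tilde u$) appear exactly, and multiplying by the constant $\rho_\delta(t_{n+1};t')=\tilde\rho_\delta$ on each slab and summing by parts reproduces every term of $\Theta^h_{\delta,\epsilon}(u,u^h;t',x')$ except for $\psi_\epsilon$ versus $\tilde\psi_\epsilon$ in the time-derivative term --- the one defect that Lemma~\ref{KL1} then controls by $h/\epsilon$. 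Your route instead generates contributions of size $\tau/\delta$ (handling the $\tilde u$ substitution ``as for $S_1'$'' gives precisely $C T(\tau/\delta)(1+\TV_g(u_0))$), and the mean-zero cancellation you invoke only trades $\|\del_t\rho_\delta\|_{L^1}=O(1/\delta)$ against an $O(\delta\,\TV_g(u_0))$ oscillation, yielding an $O(\TV_g(u_0))$ quantity rather than a small one. Since $\tau/\delta\le\gamma_1 h/\delta$ is not of the form $h/\epsilon$ for general $\delta$, your argument establishes the stated bound only after the choice $\delta=\epsilon$, which is made later in the proof of Theorem~\ref{MT}; it is therefore sufficient for the main theorem but does not prove the proposition as stated, and as written it cannot be carried out term by term because the terms it sets out to estimate are not the ones actually present.
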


\begin{proof} 1. For each $c \in \RR$ and in the sense of distributions we have
$$
  U_t(u(t,x),c) + \dive F_x(u(t,x),c) + \sgn(u(t,x)-c) (\dive f)(c,x) \leq
  0.
$$
Now, we set $c=u^h(t',x')$ for each $(t',x') \in [0,T] \times M$
fixed. Since $u$ is an entropy solution to \eqref{CON1}, for
$n=0,1,\ldots$, we have
$$
\begin{aligned}
& \int_M  \Big(U\big(u(t_{n+1},x),u^h(t',x')\big) - U\big(u(t_n, x),
u^h(t',x')\big)\Big) \psi_{\epsilon}(x;x')dv_g(x)
\\
&- \int_{t_n}^{t_{n+1}}\int_M \langle F_x(u(t,x),u^h(t',x')),
\nabla_g \psi_{\epsilon}(x;x') \rangle_g \, dv_g(x) dt
\\
&+ \int_{t_n}^{t_{n+1}}\int_M \sgn(u(t,x)-u^h(t',x')) (\dive
f)(u^h(t',x'),x) \: \psi_{\epsilon}(x;x') \, dv_g(x) dt \leq 0.
\end{aligned}
$$

2. Next, we multiply this inequality by
$\tilde{\rho}_\delta(t;t')=\rho_{\delta}(t_{n+1};t')$ and summing
the first term in time, it follows that
$$
\begin{aligned}
 -\sum _{n=0}^{n_T-1} & \int_M U(u(t_n,x),u^h)\Big(\rho_{\delta}(t_{n+1};t')
 - \rho_{\delta}(t_n;t')\Big)\psi_{\epsilon}(x;x')dv_g(x)
\\
&-\iint_{Q_T} \langle F_x(u(t,x),u^h),\nabla_g
\psi_{\epsilon}(x;x')\rangle \tilde{\rho}_{\delta}(t;t') \;
dv_g(x) dt
\\
&+\iint_{Q_T} \sgn(u(t,x)-u^h) (\dive f)_x(u^h) \:
\psi_{\epsilon}(x;x') \tilde{\rho}_{\delta}(t;t') \; dv_g(x) dt
\\
&+  \int_M U(u(T,x),u^h)\rho_{\delta}(T;t')
\psi_{\epsilon}(x;x')dv_g(x)
\\
& -\int_M U((u(0,x),u^h)\rho_{\delta}(t')
\psi_{\epsilon}(x;x')dv_g(x)\leq 0.
\end{aligned}
$$
By the definition \eqref{A.E.S.} of $\tilde{u}$ we have the identity
$$
\begin{aligned}
\int_M & U(u(t_n,x),u^h)\Big(\rho_{\delta}(t_{n+1};t') -
\rho_{\delta}(t_n;t')\Big)\psi_{\epsilon}(x;x')dv_g(x)
\\ &= \int_{t_n}^{t_{n+1}}\int_M U(\tilde{u}(t,x),u^h) \del_t \rho_{\delta}(t;t') \psi_{\epsilon} (x;x') \,  dv_g(x)
dt.
\end{aligned}
$$
Therefore, $\Euuh$ is bounded above by
$$
\begin{aligned}
   \iint_{Q_T} \iint_{Q_T}& U(\tilde{u}(t,x),u^h) \del_t
\rho_{\delta}(t;t')
\\
&  \Big(\psi_{\epsilon} (x;x') - \tilde{\psi}_{\epsilon}
(x;x') \Big)\, dv_g(x) dt \; dv_g(x') dt'.
\end{aligned}
$$

3. By integrating by parts the above equation
with respect to $t$, it follows that
$$
\begin{aligned}
I := &\iint_{Q_T}\iint_{Q_T}U(\tilde{u}(t,x),u^h) \del_t
\rho_{\delta}(t;t') \big(\psi_{\epsilon}- \tilde{\psi}_{\epsilon}
\big)\, dv_g(x) dt \; dv_g(x') dt'
\\
=& -\sum _{n=0}^{n_T-1} \int_M \iint_{Q_T}
\Big(U(u(t_{n+1},x),u^h)- U(u(t_n,x),u^h)\Big) \; (\psi_{\epsilon}
- \tilde{\psi}_{\epsilon})
 \\   &   \hspace{140pt} 
\rho_{\delta}(t_{n+1};t') dv_g(x')dt' \; dv_g(x)
\\
&+ \int_M\iint_{Q_T} U(u(T,x),u^h)\big(\psi_{\epsilon}-
\tilde{\psi}_{\epsilon} \big) \rho_{\delta} (T;t')\, dv_g(x') dt'
\; dv_g(x)
\\
& -\int_M\iint_{Q_T} U(u(0,x),u^h)\big(\psi_{\epsilon}-
\tilde{\psi}_{\epsilon} \big) \rho_{\delta} (t')\, dv_g(x') dt'\;
dv_g(x), 
\end{aligned}
$$
and thus 
$$
\begin{aligned}
I \leq 
& \sum _{n=0}^{n_T-1} \tau \int_M \iint_{Q_T} |u_t| \:
\rho_{\delta}(t_{n+1}; t') \: |\psi_{\epsilon}(x;x')-
\tilde{\psi}_{\epsilon}(x;x')| dv_g(x')dt' \; dv_g(x)
\\
& +\int_{M} \iint_{Q_T} \Big( |u(T,x) - u^h(t',x')|
\rho_{\delta}(T;t') + |u(0,x) - u^h(t',x')| \rho_{\delta}(t')
\Big)
\\
& \hspace{50pt}  |\psi_{\epsilon}(x;x')-
\tilde{\psi}_{\epsilon}(x;x')| \: dv_g(x')dt' \; dv_g(x)
\\
\leq 
& C \; \frac{h}{\epsilon} \Big( T \; \big(C +C \, \TV_g(u_0)\big) + |M|_g \:
\big(C + C \, \|u_0\|_{L^\infty}\big) \Big).
\end{aligned}.
$$
\end{proof}

\section{Entropy production for the approximate solutions}

It remains to control the approximate entropy dissipation
form for the approximate solution.

\begin{proposition}[Estimate of the quantity $\Euhu$]
The following inequality holds 
\label{PEUHU}
$$
\begin{aligned}
  \Euhu \leq & C \, \big(1+ \|u_0\|_{L^\infty} \big)\:
  \big(\frac{h}{\epsilon} + \tau + \epsilon \big) |M|_g
\\
  &+ C \, \big(1 + \|u_0\|_{L^2(M;g)} \big) \:
\frac{h^{1/2}}{\epsilon} \; |M|_g^{1/2},
\end{aligned}
$$
where $C>0$ may depend on $T$ and the metric $g$, but do not
depend on $h$, $\epsilon$, $\tau$, and $\delta$.
\end{proposition}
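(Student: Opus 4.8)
The plan is to run the argument of Proposition~\ref{PEUUH} with the roles of $u$ and $u^h$ interchanged, replacing the \emph{continuous} entropy inequality by the \emph{discrete} one of Lemma~\ref{Lem1}; the single genuinely new phenomenon is the numerical entropy dissipation carried by the scheme. Fixing $(t,x)\in Q_T$ and taking the Kruzkov family with constant $c=u(t,x)$, I would apply the discrete inequality \eqref{FVM.7}, multiply it by the discrete test weight $\tilde{\rho}'_{\delta}(t;t')\,\psi_{\epsilon}(x;x')$, sum over the time index $n$, over $K\in\TT^h$, and over $e\in\dK$ with the weights $|e|/p_K$ prescribed by the convex-combination identity \eqref{CC4}, and finally integrate in $(t,x)$. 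The aim is to recognize the resulting expression as $\Euhu$ up to controllable remainders.

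Next I would perform the two discrete integrations by parts. Summing the telescoping differences of $U$ in the time index reproduces the factor $\del_{t'}\rho_{\delta}$ and generates the boundary contributions at $t'=0$ and $t'=T$. For the flux, the conservation property \eqref{FVM.5} makes the numerical entropy flux $\Fek(\unk,\unke)$ telescope across interfaces under a discrete Gauss--Green rearrangement, so that it becomes paired with the difference $\psi_{\epsilon}(x;x'_K)-\psi_{\epsilon}(x;x'_{K_e})$, while the consistency property \eqref{FVM.4} identifies the part $\Fek(\unk,\unk)$ with the continuous flux term in the definition of $\Euhu$. Three families of benign remainders appear: the mismatch between the averaged test functions $\tilde{\psi}_{\epsilon}$ and $\psi_{\epsilon}$, estimated by Lemma~\ref{KL1} and contributing $h/\epsilon$; the parallel-transport and curvature corrections in the flux, handled exactly as the term $S'_2$ in the proof of Proposition~\ref{PSUUH} and contributing $\epsilon$; and the correction $\dnnkee$ defined through \eqref{CC2}, which encodes the spatial dependence of the flux (note that $\sum_{e\in\dK}|e|\,\fek(\unk,\unk)=\int_K(\dive f)(\unk,\cdot)\,dv_g$ by \eqref{FVM.4} and Gauss--Green, whence $\dnnkee=O(\tau)$) and contributes $\tau$. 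Together these yield the first bound $C\,(1+\|u_0\|_{L^\infty})\,(h/\epsilon+\tau+\epsilon)\,|M|_g$.

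The crux is the numerical viscosity left over from the telescoping step, namely the deviation $\Fek(\unk,\unke)-\Fek(\unk,\unk)$, which by the Lipschitz regularity of the numerical flux is of size $|\unke-\unk|$ and which remains coupled to $\psi_{\epsilon}(x;x'_K)-\psi_{\epsilon}(x;x'_{K_e})$. Since this test difference is of order $h\,|\nabla_g\psi_{\epsilon}|$ and hence has $L^1$ mass in $x$ of order $h/\epsilon$, the viscosity contribution is controlled by $\frac{h}{\epsilon}\sum_{n,K,e}\tau|e|\,|\unke-\unk|$. I would then apply Cauchy--Schwarz,
\[
   \frac{h}{\epsilon}\sum_{n,K,e}\tau|e|\,|\unke-\unk|
   \le \frac{h}{\epsilon}\Big(\sum_{n,K,e}\tau|e|\Big)^{1/2}
   \Big(\sum_{n,K,e}\tau|e|\,|\unke-\unk|^2\Big)^{1/2},
\]
and bound the two factors separately: the mesh-regularity assumptions \eqref{IN1}--\eqref{IN2} give $\sum_{n,K,e}\tau|e|=T\sum_K p_K\le C\,|M|_g\,h^{-1}$, while the quadratic (energy) entropy $U(u)=u^2/2$ supplies, after summation over all cells and times together with \eqref{CFL} and the cancellation of fluxes by \eqref{FVM.5}, the weak bound $\sum_{n,K,e}\tau|e|\,|\unke-\unk|^2\le C\,\|u_0\|_{L^2(M;g)}^2$. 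Multiplying, the viscosity term is $C\,(1+\|u_0\|_{L^2(M;g)})\,\frac{h^{1/2}}{\epsilon}\,|M|_g^{1/2}$, which is the second asserted bound.

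The main obstacle is precisely this last step. Unlike Proposition~\ref{PEUUH}, where the exact solution produces no dissipation, here one must both establish the weak quadratic control of the interface jumps and then convert it, through the Cauchy--Schwarz balancing above, into the linear $L^1$-type quantity entering $\Euhu$. It is the single power $h^{1/2}$ produced at this point---one order worse than the $h$-rate remainders---that, upon optimizing the free parameter $\epsilon$ in the basic approximation inequality of Proposition~\ref{PBAI}, degrades the global convergence rate to $h^{1/4}$.
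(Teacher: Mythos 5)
Your proposal is correct and follows essentially the same route as the paper: the discrete entropy inequality \eqref{FVM.7} tested against $\tilde{\rho}'_\delta\,\psi_\epsilon$, discrete summation by parts in time and across interfaces via \eqref{FVM.4}--\eqref{FVM.5}, the identification of $O(h/\epsilon)$, $O(\tau)$ and $O(\epsilon)$ remainders, and, crucially, the Cauchy--Schwarz balancing of the interface-jump term against the quadratic entropy-dissipation bound (which the paper imports from \cite{ABL} rather than rederiving) to produce the $h^{1/2}/\epsilon$ contribution. The only cosmetic difference is the bookkeeping of the jump quantity (the paper works with $|u^{n+1}_{K,e}-u^{n+1}_K|$ paired with $\tilde{\psi}'_\epsilon-\tilde{\psi}'_{\epsilon,e}$, and uses the weights $|K|\,|e|/p_K$ so that the first Cauchy--Schwarz factor is exactly $T|M|_g$), which does not change the estimate.
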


Once this estimate is established we can complete the proof of the main theorem, as follows.

\begin{proof}[Proof of Theorem~\ref{MT}]
1. First, by \eqref{IN1} there exists $\gamma_1>0$, such that
$\tau \leq \gamma_1 \; h$. Moreover, without loss of generality we
can take $\delta = \epsilon$. Therefore, combining 
Propositions \ref{PBAI}-\ref{PEUHU} together and denoting 
$$
  \begin{aligned}
  &A_0(T):= C \, \big(1 + \|u_0\|_{L^\infty}\big), 
\qquad 
A_1(T):= C \, \big(1+\TV_g(u_0)\big)\\
  &A_2(T):= C \, \big(1 + \|u_0\|_{L^2(M;g)} \big),
  \end{aligned}
$$
we obtain
$$
  \|u^h(T)-u(T)\|_{L^1(M; g)} \leq \frac{C}{2} \: \Big(I\!\!L_{-1} \: \epsilon^{-1}
  +2 \; I\!\!L_{0}+ I\!\!L_{1} \: \epsilon \Big),
$$
where
$$
\begin{aligned}
&I\!\!L_{-1}:= A_0(T) \:|M|_g \: h
 + A_1(T) \: h + A_2(T) \; |M|_g^{1/2} \: h^{1/2},
\\
&I\!\!L_{0}:= A_0(T) \:|M|_g \: h + \|u^h(0)-u(0)\|_{L^1(M;g)},
\\
& I\!\!L_{1}:= A_0(T) \; |M|_g + A_1(T). 
\end{aligned}
$$
Then, minimizing with respect to
$\epsilon$, we obtain
$$
  \|u^h(T)-u(T)\|_{L^1(M;g)} \leq C \: \Big( \sqrt{ I\!\!L_{-1} \: I\!\!L_{1}}
  + I\!\!L_{0} \Big).
$$

2. Next, proceeding as in the proof of Lemma \ref{KL} and by
 \eqref{PCF}, that is, $u^h(0,x) = u^0_K$, we have
$$
\aligned 
  \|u^0_K-u_0\|_{L^1(M;g)} 
  & \leq
  \int_M \dashint_{K}|u_0(z)-u_0(x)| dv_g(z) \; dv_g(x)
  \\
  & \leq \|\nabla_g u_0\|_{L^1(M;g)} \: h = \TV_g(u_0) \: h.
\endaligned
$$
Consequently, it follows that
$$
  \begin{aligned}
  & \|u^h(T)-u(T)\|_{L^1(M;g)} 
  \\
  &\leq \Big( A_0(T) |M|_g + A_1(T)
  \Big) \: C \;h
  \\
  & \quad + \Big(A_0(T) |M|_g^{1/2} + \big(A_0(T) \, A_1(T)\big)^{1/2}
  \Big) \: C \; |M|_g^{1/2} \; h^{1/2}
  \\
  & \quad + \Big( \big(A_0(T) \, A_2(T) \big)^{1/2} |M|_g^{1/2} + \big(A_1(T) \, A_2(T)\big)^{1/2}
  \Big) \: C \; |M|_g^{1/4} \; h^{1/4},
  \end{aligned}
$$
which completes the proof of Theorem~\ref{MT}.
\end{proof}

\begin{proof}[Proof of Proposition~\ref{PEUHU}] 1. Fix $K \in \TT^h$ and $e \in \partial K$.
For $(t,x) \in [0,T] \times M$ fixed, we set $c =
u(t,x)$ and $u^h(t',x') = u^n_K$, for $(t',x') \in [t'_n,t'_{n+1})
\times M$, $(n= 0,1,\ldots)$ and we define 
$$
  \tilde{\psi}'_{\epsilon,e}(x;x'):= \dashint_e
  \psi_\epsilon(x;y') \; d\Gamma_g(y').
$$
Therefore, by Definition \ref{PAF2} and analogously to Lemma
\ref{KL1}, it follows that
$$
\aligned
& \tilde{\psi}'_{\epsilon}(x;x') 
   = \sum_{e \in \dK}
  \frac{|e|}{p_K} \: \tilde{\psi}'_{\epsilon,e}(x;x'),
\\ 
& \int_M |\tilde{\psi}'_{\epsilon}(x;x') -
  \tilde{\psi}'_{\epsilon,e}(x;x')|\; dv_g(x)  \leq C \; \frac{h}{\epsilon},
  \endaligned 
$$
where the positive constant $C$ does not depend on $h,\epsilon>0$.
Now, we write the local entropy inequality \eqref{FVM.7} for $K$
and, since it is also valid for $K_e$, we obtain respectively
$$
\begin{aligned}
\frac{|K|}{p_{K}} \Big(U(u_{K,e}^{n+1},c)& - U(u_K^n,c) \Big)
\\
&+ \tau \Big( F_{e,K}(u_K^n,u_{K,e}^n,c) - F_{e,K}(u_K^n,
u_K^n,c)\Big) \leq \frac{|K|} {p_{K}} \dnnkee,
\end{aligned}
$$
$$
\begin{aligned}
\frac{|K_e|}{p_{K_e}} \Big(U(u_{K_e,e}^{n+1},c)& - U(u_{K_e}^n,c)
\Big)\\
& + \tau \Big( F_{e,K_e}(u_{K_e}^n,u_{K}^n,c) -
F_{e,K_e}(u_{K_e}^n, u_{K_e}^n,c)\Big) \leq \frac{|K_e|} {p_{K_e}}
D_{K_e,e}^{n+1}.
\end{aligned}
$$
We sum the two above inequalities and from \eqref{FVM.4} and
\eqref{FVM.5}, we obtain
\be
\begin{aligned} \label{1000}
\frac{|K|}{p_{K}}& \Big(U(u_{K,e}^{n+1},c) - U(u_K^n,c) \Big) +
\frac{|K_e|}{p_{K_e}} \Big(U(u_{K_e,e}^{n+1},c) - U(u_{K_e}^n,c)
\Big)\\
&+ \tau \: \dashint_{e} \langle
F_{y'}(u_{K_e}^n,c)-F_{y'}(u_{K}^n,c), \nek(y')\rangle_g \,
    d\Gamma_g(y')
\\
&\leq \frac{|K|} {p_{K}} \dnnkee + \frac{|K_e|}
{p_{K_e}}D_{K_e,e}^{n+1}.
\end{aligned}
\ee

2. We multiply inequality \eqref{1000} by $|e| \;
\tilde{\psi}'_{\epsilon,e}$ and sum over all $e \in \partial K$
and $K \in \TT^h$:
\be
\begin{aligned} \label{2000}
&\sum_{\substack{e\in \dK \\ K \in \TT^h}} \frac{|K|}{p_{K}}
U(u_{K,e}^{n+1},c) \; |e| \; \tilde{\psi}'_{\epsilon,e} - \sum_{K
\in \TT^h} |K| \; U(u_K^n,c) \; \tilde{\psi}'_{\epsilon}
\\
& - \tau \sum_{K \in \TT^h} \int_{\partial K} \langle
F_{y'}(u_{K}^n,c), \nek(y')\rangle_g \,
\tilde{\psi}'_{\epsilon,e}(x,x') \;
    d\Gamma_g(y')
\\
&\leq \sum_{\substack{e\in \dK \\ K \in \TT^h}} \frac{|K|} {p_{K}}
\: |e| \; \tilde{\psi}'_{\epsilon,e} \; \dnnkee,
\end{aligned}
\ee
where we have used 
$$
  \sum_{\substack{e\in \dK \\ K \in \TT^h}} \frac{|K_e|}{p_{K_e}}
  U(u_{K_e,e}^{n+1},c)\; \tilde{\psi}'_{\epsilon,e} = \sum_{\substack{e\in \dK \\ K \in \TT^h}} \frac{|K|}{p_{K}}
  U(u_{K,e}^{n+1},c) \; \tilde{\psi}'_{\epsilon,e},
$$
$$
  \sum_{\substack{e\in \dK \\ K \in \TT^h}} \frac{|K_e|}{p_{K_e}}
  U(u_{K_e,e}^{n},c)\; \tilde{\psi}'_{\epsilon,e} = \sum_{\substack{e\in \dK \\ K \in \TT^h}} \frac{|K|}{p_{K}}
  U(u_{K,e}^{n},c) \; \tilde{\psi}'_{\epsilon,e},
$$
$$
\begin{aligned}
  \sum_{\substack{e\in \dK \\ K \in \TT^h}} \dashint_{e}
\langle F_{y'}(u_{K_e}^n,c), &\nek(y')\rangle_g \;
\tilde{\psi}'_{\epsilon,e} \,
    d\Gamma_g(y')
        \\
     &= - \sum_{\substack{e\in \dK \\ K \in \TT^h}} \dashint_{e}
\langle F_{y'}(u_{K}^n,c), \nek(y')\rangle_g \;
\tilde{\psi}'_{\epsilon,e} \,
    d\Gamma_g(y').
\end{aligned}
$$
Since $u_{K}^{n+1}$ is a convex combination of $u_{K,e}^{n+1}$ and
the Kruzkov's entropy $U$ is convex, we have by Jensen's
inequality
$$
  \sum_{K \in \TT^h} |K| \; U(u_{K}^{n+1})\; \tilde{\psi}'_{\epsilon} \leq
  \sum_{\substack{e\in \dK \\ K \in \TT^h}} \; |K| \; \frac{|e|}{p_{K}}
  \; U(u_{K,e}^{n+1}) \; \tilde{\psi}'_{\epsilon}.
$$
Therefore, from \eqref{2000} we obtain 
\be
\begin{aligned} \label{3000}
&\sum_{K \in \TT^h} |K| \Big( U(u_{K}^{n+1},c) - U(u_K^n,c) \Big)
\; \tilde{\psi}'_{\epsilon}
\\
& \quad - \tau \sum_{K \in \TT^h} \int_{\partial K} \langle
F_{y'}(u_{K}^n,c), \nek(y')\rangle_g \, \psi_{\epsilon}(x;y') \;
d\Gamma_g(y')
\\
&\leq \sum_{\substack{e\in \dK \\ K \in \TT^h}} \frac{|K|} {p_{K}}
\: |e| \; \tilde{\psi}'_{\epsilon,e} \; \dnnkee
+ \sum_{\substack{e\in \dK \\ K \in \TT^h}} \frac{|K|} {p_{K}} \:
|e| \; U(u_{K,e}^{n+1},c) \:
\big(\tilde{\psi}'_{\epsilon}-\tilde{\psi}'_{\epsilon,e}\big)
\\
& \quad + \tau \sum_{\substack{e\in \dK \\ K \in \TT^h}} \int_{e}
\langle F_{y'}(u_{K}^n,c), \nek(y')\rangle_g \, \Big(
\tilde{\psi}'_{\epsilon,e}(x;x')-\psi_{\epsilon}(x;y')\Big) \;
d\Gamma_g(y').
\end{aligned}
\ee

3. Applying Gauss-Green's formula it follows that
$$
\begin{aligned}
&\int_{\partial K} \langle F_{y'}(u_{K}^n,c),
  \nek(y')\rangle_g \, \psi_{\epsilon}(x;y') \; d\Gamma_g(y')
\\
 & = \int_K \big(\dive' F_{x'}\big)(u_{K}^n,c)
 \psi_{\epsilon}(x;x') \; dv_g(x')
  + \int_K \langle F_{x'}(u_{K}^n,c), \nabla_g'
\psi_{\epsilon}(x;x')\rangle_g \; dv_g(x').
\end{aligned}
$$
Then, from \eqref{3000} we deduce that 
$$
\begin{aligned} 
&\sum_{K \in \TT^h} |K| \Big( U(u_{K}^{n+1},c) - U(u_K^n,c) \Big) \; \tilde{\psi}'_{\epsilon}
\\
& \quad - \int_{t'_n}^{t'_{n+1}} \int_{M} \langle
       F_{x'}(u^h(t',x'),c), \nabla_g' \psi_{\epsilon}(x;x')\rangle_g \; dv_g(x')
\\
 & \quad - \int_{t'_n}^{t'_{n+1}} \int_{M} \big(\dive' F_{x'}\big)(u^h(t',x'),c)
 \psi_{\epsilon}(x;x') \; dv_g(x')
\\
 & \leq \sum_{\substack{e\in \dK \\ K \in \TT^h}} \frac{|K|} {p_{K}}
\: |e| \; \tilde{\psi}'_{\epsilon,e} \; \dnnkee
+ \sum_{\substack{e\in \dK \\ K \in \TT^h}} \frac{|K|} {p_{K}} \:
|e| \; U(u_{K,e}^{n+1},c) \:
\big(\tilde{\psi}'_{\epsilon}-\tilde{\psi}'_{\epsilon,e}\big)
\\
& \quad + \int_{t'_n}^{t'_{n+1}} \sum_{\substack{e\in \dK \\ K \in
\TT^h}} \int_{e} \langle F_{y'}(u_{K}^n,c), \nek(y')\rangle_g \,
\Big(\tilde{\psi}'_{\epsilon,e}(x;x')-\psi_{\epsilon}(x;y')\Big)
\; d\Gamma_g(y').
\end{aligned}
$$
By algebraic manipulation, we see that the expression
$$
\begin{aligned} 
&\sum_{K \in \TT^h} |K| \Big( U(u_{K}^{n+1},c) - U(u_K^n,c) \Big)
\; \tilde{\psi}'_{\epsilon}
\\
& \quad - \int_{t'_n}^{t'_{n+1}} \int_{M} \langle
F_{x'}(u^h(t',x'),c), \nabla_g' \psi_{\epsilon}(x;x')\rangle_g \;
dv_g(x')dt'
\\
 & \quad + \int_{t'_n}^{t'_{n+1}} \int_{M} \sgn(u^h(t',x')-c) \; \big(\dive' f \big)(c,x')
 \psi_{\epsilon}(x;x') \; dv_g(x')dt'
\end{aligned}
$$
is bounded above by 
$$
\begin{aligned}
 & \leq \int_{t'_n}^{t'_{n+1}} \int_{M} \sgn(u^h(t',x')-c) \; \big(\dive' f \big)(u^h(t',x'),x')
 \psi_{\epsilon}(x;x') \; dv_g(x')dt'
\\
  & \quad + \sum_{\substack{e\in \dK \\ K \in \TT^h}} \frac{|K|} {p_{K}}
\: |e| \; \tilde{\psi}'_{\epsilon,e} \; \dnnkee
+ \sum_{\substack{e\in \dK \\ K \in \TT^h}} \frac{|K|} {p_{K}} \:
|e| \; U(u_{K,e}^{n+1},c) \:
\big(\tilde{\psi}'_{\epsilon}-\tilde{\psi}'_{\epsilon,e}\big)
\\
& \quad + \int_{t'_n}^{t'_{n+1}} \sum_{\substack{e\in \dK \\ K \in
\TT^h}} \int_{e} \langle F_{y'}(u_{K}^n,c), \nek(y')\rangle_g \,
\Big(\tilde{\psi}'_{\epsilon,e}(x;x')-\psi_{\epsilon}(x;y')\Big)
\; d\Gamma_g(y').
\end{aligned}
$$
Now, we multiply this inequality by
$\tilde{\rho}'_\delta(t;t')=\rho_{\delta}(t;t'_{n+1})$ and summing
with respect to time variable, i.e. $t'$, we obtain that the expression 
$$
\begin{aligned} 
& - \sum _{n=0}^{n_T-1} \int_{M} U(u^n_K,c)
\Big(\rho_{\delta}(t;t'_{n+1}) - \rho_{\delta}(t;t'_{n}) \Big) \;
\tilde{\psi}'_{\epsilon}(x;x') \; dv_g(x')
\\
& - \iint_{Q_T} \langle F_{x'}(u^h(t',x'),c), \nabla_g'
\psi_{\epsilon}(x;x')\rangle_g \; \tilde{\rho}'_\delta(t;t') \;
dv_g(x')dt'
\\
 & + \iint_{Q_T} \sgn(u^h(t',x')-c) \; \big(\dive' f \big)(c,x')
 \psi_{\epsilon}(x;x') \; \tilde{\rho}'_\delta(t;t') \; dv_g(x')dt'
\\
&+  \int_M U(u^h(T,x'),c)\rho_{\delta}(t;T) \;
\tilde{\psi}'_{\epsilon}(x;x') \; dv_g(x')
\\
& -\int_M U(u^h(0,x'),c)\rho_{\delta}(t) \;
\tilde{\psi}'_{\epsilon}(x;x') \; dv_g(x')
\end{aligned}
$$
is bounded above by
$$
\begin{aligned} 
\leq 
& \iint_{Q_T} \sgn\big(u^h(t',x')-c\big) \; \big(\dive' f
\big)(u^h(t',x'),x')
 \psi_{\epsilon}(x;x') \; \tilde{\rho}'_\delta(t;t') \; dv_g(x')dt'
\\
  & + \sum _{n=0}^{n_T-1} \sum_{\substack{e\in \dK \\ K \in \TT^h}} \frac{|K|} {p_{K}}
\: |e| \; \tilde{\psi}'_{\epsilon,e} \; \tilde{\rho}'_\delta(t;t')
\; \dnnkee
\\
&+ \sum _{n=0}^{n_T-1} \sum_{\substack{e\in \dK \\ K \in \TT^h}}
\frac{|K|} {p_{K}} \: |e| \; U(u_{K,e}^{n+1},c) \:
\big(\tilde{\psi}'_{\epsilon}-\tilde{\psi}'_{\epsilon,e}\big) \;
\tilde{\rho}'_\delta(t;t')
\\
& + \int_{0}^{T} \hskip-.3cm  \sum_{\substack{e\in \dK \\ K \in \TT^h}}  \hskip-.15cm p_K
\: \frac{|e|}{p_K} \dashint_{e} \langle F_{y'}(u_{K}^n,c),
\nek(y')\rangle_g
       \Big(\tilde{\psi}'_{\epsilon,e}(x;x')-\psi_{\epsilon}(x;y')\Big)
\; \tilde{\rho}'_\delta(t;t') \; d\Gamma_g(y') dt'.
\end{aligned}
$$

4. Following the lines of proof of Proposition \ref{PEUUH}, we can also derive 
the identity
$$
\begin{aligned}
 & \int_M  U(u^n_K,c)\Big(\rho_{\delta}(t;t'_{n+1}) - \rho_{\delta}(t;t'_{n})\Big)
  \tilde{\psi}'_{\epsilon}(x;x') dv_g(x')
\\
 &= \int_{t'_n}^{t'_{n+1}}\int_M U(u^h(t',x'),c) \;
\del_{t'} \rho_{\delta}(t;t') \; \tilde{\psi}'_{\epsilon}(x;x') \,
dv_g(x') dt'.
\end{aligned}
$$
Then, the term $\Euhu$ is bounded above by
\be
\begin{aligned} \label{7000}
& \iint_{Q_T} \Big( \sum _{n=0}^{n_T-1} \sum_{\substack{e\in \dK \\
K \in \TT^h}} \frac{|K|} {p_{K}} \: |e| \;
 \tilde{\rho}'_\delta \; \tilde{\psi}'_{\epsilon,e} \;
 \dnnkee \Big) dv_g(x)dt
\\
& + \iint_{Q_T} \Big(\sum _{n=0}^{n_T-1} \sum_{\substack{e\in \dK \\
K \in \TT^h}} \frac{|K|} {p_{K}} \: |e| \; U(u_{K,e}^{n+1},c) \:
\big(\tilde{\psi}'_{\epsilon}-\tilde{\psi}'_{\epsilon,e}\big) \;
\tilde{\rho}'_\delta(t;t')\Big) dv_g(x)dt
\\
&+  \iint_{Q_T} \int_M \Big(|u^h(T,x')-u(t,x)| \rho_{\delta}(t;T)
+ |u^h(0,x')-u(t,x)| \; \rho_{\delta}(t)\Big) \;
\\
& \hspace{50pt} 
\big|\psi_{\epsilon}(x;x')-\tilde{\psi}'_{\epsilon}(x;x')\big| \;
dv_g(x') \; dv_g(x)dt
\\
 &+ \iint_{Q_T}\int_{0}^{T} \sum_{K \in \TT^h} \int_{K}
  \big|\big(\dive'f \big)(u_{K}^n,x')
  -\big(\dive'f \big)(u_{K}^n,x)\big|
\\
  & \hspace{90pt}  \tilde{\rho}'_\delta(t;t') \;
\psi_{\epsilon}(x;x') \; dv_g(x')dt' \; dv_g(x)dt
\\
& + \iint_{Q_T}\int_{0}^{T} \sum_{K \in \TT^h} \int_{\dK}
\big|F_{y'}(u_{K}^n,c)- \sigma_{y'}(F_{y}(u_{K}^n,c)) \big|_g 
\\
& \hspace{80pt} 
\big|\tilde{\psi}'_{\epsilon}(x;x')-\psi_{\epsilon}(x;y')\big| \;
\tilde{\rho}'_\delta(t;t') \; d\Gamma_g(y') dt'  dv_g(x)dt. 
\end{aligned}
\ee

5. We write \eqref{7000} as $E_1 + E_2 + E_3 + E_4 + E_5$ with 
obvious notation. In order to estimate $E_1$ we recall that
$$
  \dnnkee = U(\unnkee,c) - U(\tunnkee,c),
$$
and
$$
  \unnkee - \tunnkee = - \tau \: \dashint_{K} \big(\dive'
  f\big)(u^n_K,x') dv_g(x').
$$
Therefore, since $U$ is convex from Aleksandrov's theorem it has
second derivative a.e. (see \cite{EG}), and we can write
$$
  \dnnkee = \tau \: \Big(- \partial_u U(\tunnkee,c) \; \dashint_{K}
  \big(\dive'
  f\big)(u^n_K,x') \: dv_g(x') + O(\tau) \Big).
$$
Then, we have
$$
\begin{aligned}
E_1 
& \leq \iint_{Q_T} \int_0^T \sum_{K \in \TT^h} \int_{K}
\big|\big(\dive' f \big)(u^n_K,x')-\big(\dive' f
\big)(u^n_K,x)\big| \; \tilde{\rho}'_\delta \;
\tilde{\psi}'_{\epsilon}\; dv_g(x')dt'
\\
& \quad + C \, T \; |M|_g \, \big(1+\|u_0\|_{L^\infty}\big)
\:\tau
\\
& \leq C' \: T \: |M|_g \: \big(1+
\|u_0\|_{L^\infty}\big)\: (\tau + \epsilon),
\end{aligned}
$$
where we have used the fact that for every compact $K$
the function $\nabla_g \big(\dive f\big)$
  is uniformly bounded in $K \times M$. Now to
estimate $E_2$, we observe that
$$
  \sum _{n=0}^{n_T-1} \sum_{\substack{e\in \dK \\
K \in \TT^h}} \frac{|K|} {p_{K}} \: |e| \; U(u_{K}^{n+1},c) \:
\big(\tilde{\psi}'_{\epsilon}-\tilde{\psi}'_{\epsilon,e}\big) = 0.
$$
Moreover, from \cite{ABL} we recall the uniform bound
$$
  \sum _{n=0}^{n_T-1} \sum_{\substack{e\in \dK \\
K \in \TT^h}} \frac{|K|} {p_{K}} \: |e| \;
|u_{K,e}^{n+1}-u_{K}^{n+1}|^2 \leq \|u_0\|^2_{L^2(M;g)} + C''
$$
for some the constant $C''>0$.

Hence, we have
$$
  \begin{aligned}
   E_2&= \iint_{Q_T} \Big(\sum _{n=0}^{n_T-1} \sum_{\substack{e\in \dK \\
K \in \TT^h}} \frac{|K|} {p_{K}} \: |e| \;
\big(U(u_{K,e}^{n+1},c)-U(u_{K}^{n+1},c)\big)
\\
&\hspace{50pt} 
\big(\tilde{\psi}'_{\epsilon}-\tilde{\psi}'_{\epsilon,e}\big) \;
\tilde{\rho}'_\delta(t;t')\Big) dv_g(x)dt
\\
& \leq \sum _{n=0}^{n_T-1} \sum_{\substack{e\in \dK \\
K \in \TT^h}} \frac{|K|} {p_{K}} \: |e| \;
\big|u_{K,e}^{n+1}-u_{K}^{n+1}\big| \int_M
\big|\tilde{\psi}'_{\epsilon}-\tilde{\psi}'_{\epsilon,e}\big| \;
dv_g(x)
\\
& \leq C \frac{\gamma_1}{\epsilon} \sum _{n=0}^{n_T-1} \sum_{\substack{e\in \dK \\
K \in \TT^h}} \frac{|K|} {p_{K}} \: |e| \;
\big|u_{K,e}^{n+1}-u_{K}^{n+1}\big| \: \tau,
  \end{aligned}
$$
where we have used \eqref{IN1}. Applying Cauchy-Schwartz's 
inequality, we obtain
$$
  \begin{aligned}
   E_2& \leq C \; \frac{\gamma_1}{\epsilon} \; \big( T \;|M|_g)^{1/2} \Big(\sum _{n=0}^{n_T-1} \sum_{\substack{e\in \dK \\
K \in \TT^h}} \frac{|K|} {p_{K}} \: |e| \;
\big|u_{K,e}^{n+1}-u_{K}^{n+1}\big|^2 \: \tau \Big)^{1/2}
\\
& \leq C \; \sqrt{\gamma_1} \;\big( T \;|M|_g)^{1/2} \;
\frac{h^{1/2}}{\epsilon} \: \Big(\|u_0\|^2_{L^2(M;g)} + C(T)
\Big)^{1/2} \:
\\
& \leq C_2 \, \big(1 + \; \|u_0\|_{L^2(M;g)} \big) \:
\frac{h^{1/2}}{\epsilon} \; |M|_g^{1/2}.
  \end{aligned}
$$
The terms $E_3$ and $E_4$ are estimated in the same way that we
have already done, that is
$$
  \begin{aligned}
  & E_3 \leq C \: |M|_g
  \Big(1+ \|u_0\|_{L^\infty}\Big) \: \frac{h}{\epsilon},
  \\
  & E_4 \leq C \: T \: |M|_g
  \Big(1+ \|u_0\|_{L^\infty}\Big) \:
  \epsilon.
  \end{aligned}
$$
Finally, we estimate the last term, that is
$$
  \begin{aligned}
  E_5&= \iint_{Q_T}\int_{0}^{T} \sum_{K \in \TT^h} |K| \: \frac{p_K}{|K|} \: \dashint_{\partial K}
  \big|F_{y'}(u_{K}^n,c)-F_{y}(u_{K}^n,c)\big|
\\
  & \hspace{80pt} 
  \big|\tilde{\psi}'_{\epsilon}(x;x')-\psi_{\epsilon}(x;y')\big| \;
  \tilde{\rho}'_\delta(t;t') \; d\Gamma_g(y') dt'  dv_g(x)dt
\\
  & \leq C \:T \: |M|_g
\Big(1+ \|u_0\|_{L^\infty}\Big) \:
\frac{h}{\epsilon},
  \end{aligned}
$$
where we have used the condition \eqref{IN2}.
\end{proof}

\section*{Acknowledgements}

The authors were partially supported by the A.N.R. (Agence
Nationale de la Recherche) through the grant 06-2-134423 entitled
{\sl ``Mathematical Methods in General Relativity''} (MATH-GR),
and by the Centre National de la Recherche Scientifique (CNRS).
The second author (WN) was also supported by the grant 311759/2006-8 
from the National Counsel of Technological and Scientific Development (CNPq)  
and by an international cooperation project between Brazil and France.

\newcommand{\auth}{\textsc}



\begin{thebibliography}{10}

\bibitem{ABL} \auth{Amorim P., Ben-Artzi M., and LeFloch P.G.,}
{\sl Hyperbolic conservation laws on manifolds. Total variation
estimates and the finite volume method}, 
Meth. Appl. Anal. 12 (2005), 291--324. 

\bibitem{ALO} \auth{Amorim P., LeFloch P.G., and Okutmustur B.,}
{\sl Finite volume schemes on Lorentzian manifolds,} preprint. 

\bibitem{BL} \auth{Ben-Artzi M. and LeFloch P.G.,}
{\sl The well-posedness theory for geometry compatible hyperbolic conservation laws on manifolds,}  
Ann. Inst. H. Poincar\'e: Nonlin. Anal. 24 (2007), 989--1008.

\bibitem{BFL} \auth{Ben-Artzi M., Falcovitz J, and LeFloch P.G.,}
{\sl Hyperbolic conservation laws on the sphere. A geometry-compatible finite volume scheme,} 
preprint, 2008. 
 
\bibitem{BP} \auth{Bouchut F. and Perthame B.,}
{\sl Kru\v zkov's estimates for scalar conservation laws revisited,} 
Trans. Amer. Math. Soc. 350 (1998), 2847--2870.

\bibitem{CCL1} \auth{Cockburn B., Coquel F., and LeFloch P.G.,}
{\sl Convergence of finite volume methods for multidimensional conservation laws,} 
SIAM J. Numer. Anal. 32 (1995), 687--705.

\bibitem{CCL2} \auth{Cockburn B., Coquel F., and LeFloch P.G.,}
{\sl An error estimate for finite volume methods for multidimensional conservation laws,} 
Math. of Comp. 63 (1994), 77--103.

\bibitem{CL1} \auth{Coquel F. and LeFloch P.G.,} 
{\sl Convergence of finite difference schemes for conservation laws in several space dimensions,}
C.R. Acad. Sci. Paris Ser. I 310 (1990), 455--460.

\bibitem{CL2} \auth{Coquel F. and LeFloch P.G.,} 
{\sl Convergence of finite difference schemes for conservation laws in several space dimensions: a general theory,}
SIAM J. Numer. Anal. 30 (1993), 675--700. 

\bibitem{CL3} \auth{Coquel F. and LeFloch P.G.,} 
{\sl Convergence of finite difference schemes for conservation laws in several space dimensions: 
the corrected antidiffusive flux approach,}
Math. Comp. 57 (1991), 169--210. 

\bibitem{DP} \auth{DiPerna R.J.,}
{\sl Measure-valued solutions to conservation laws,} 
Arch. Rational Mech. Anal. 88 (1985), 223--270.

\bibitem{EG} \auth{Evans L.C. and Gariepy R.F.,} 
{\it Lecture Notes on Measure Theory and Fine Properties of Functions,} 
    CRC Press,  Boca Raton, Florida, 1992. 

\bibitem{EGH} \auth{Eymard R., Gallou\"et T., and Herbin R.,}
{\sl The finite volume method,}
to appear in Handbook of Numerical Analysis, Vol. VII, 
Handb. Numer. Anal., VII, North-Holland, Amsterdam, 2000, pp.~713--1020.

\bibitem{Kroener} \auth{Kr\"oner D.,}
{\sl Finite volume schemes in multidimensions,}
in ``Numerical analysis'' 1997 (Dundee),  Pitman Res. Notes Math. Ser., 380, Longman, 
Harlow, 1998, pp.~179--192. 

\bibitem{K} \auth{Kruzkov S.,}
{\sl First-order quasilinear equations with several space variables,} 
Math. USSR Sb. 10 (1970), 217--243.

\bibitem{KNN1} \auth{Kuznetzov N.N.,}
{\sl Accuracy of some approximate methods for computing the weak
solutions of a first-order quasi-linear equations,} 
USSR Comput. Math. Math. Phys. 16 (1976), 105--119.

\bibitem{KNN2} \auth{Kuznetzov N.N.,}
{\sl On stable methods for solving nonlinear first-order partial
differential equations in the class of discontinuous solutions,}
Topics in Numerical Analysis III (Proc. Roy. Irish Acad. Conf.),
Trinity College, Dublin (1976), 183--192.

\bibitem{LeFloch} \auth{LeFloch P.G.,}  
{\sl Hyperbolic conservation laws and spacetimes with limited regularity,}  
Proc. 11th Inter. Confer. on ``Hyper. Problems: theory, numerics, and applications'', 
ENS Lyon, July 17--21, 2006, S. Benzoni and D. Serre ed., Springer Verlag, pp.~679--686.  

\bibitem{LO} \auth{LeFloch P.G. and Okutmustur B.,}
{\sl Conservation laws on manifolds with limited regularity,}  
C.R. Acad. Sc. Paris, Ser. I 346 (2008), 539--543. 
 
\bibitem{LO2} \auth{LeFloch P.G. and Okutmustur B.,}
{\sl Hyperbolic conservation laws on Lipschitz continuous spacetimes,}  
in preparation.

\bibitem{Lucier} \auth{Lucier B.J.,}
{\sl A moving mesh numerical method for hyperbolic conservation laws,}
Math. Comp. 46 (1986), 59--69.

\bibitem{Lucier2} \auth{Lucier B.J.,}
{\sl Regularity through approximation for scalar conservation laws,}
SIAM J. Math. Anal. 19 (1988), 763--773.

\bibitem{Panov} \auth{Panov E.Y.,} 
{\sl On the Cauchy problem for a first-order quasilinear equation on a manifold,}   
Differential Equations 33 (1997), 257--266. 

\bibitem{RBL} \auth{Rossmanith J.A., Bale D.S., and LeVeque R.J.,}
{\sl A wave propagation algorithm for hyperbolic systems on curved manifolds,}
J. Comput. Phys. 199 (2004), 631-Ð662. 

\bibitem{Szepessy} \auth{Szepessy A.,}
{\sl Convergence of a shock-capturing streamline diffusion finite element method for a scalar conservation law 
in two space dimensions,} 
 Math. Comp. 53 (1989), 527--545.  
 
\bibitem{Szepessy2} \auth{Szepessy A.,}  
{\sl Convergence of a streamline diffusion finite element method for scalar conservation laws with boundary conditions,}
RAIRO Mod\'el. Math. Anal. Num\'er. 25 (1991), 749--782.  
\end{thebibliography}
\end{document}